\newcommand*\patchAmsMathEnvironmentForLineno[1]{
  \expandafter\let\csname old#1\expandafter\endcsname\csname #1\endcsname
  \expandafter\let\csname oldend#1\expandafter\endcsname\csname end#1\endcsname
  \renewenvironment{#1}
     {\linenomath\csname old#1\endcsname}
     {\csname oldend#1\endcsname\endlinenomath}}
\newcommand*\patchBothAmsMathEnvironmentsForLineno[1]{
  \patchAmsMathEnvironmentForLineno{#1}
  \patchAmsMathEnvironmentForLineno{#1*}}
\newtheorem{Theorem}{\bf Theorem}[section]
\newtheorem{Lemma}{\bf Lemma}[section]
\newtheorem{Proposition}{\bf Proposition}[section]  
\newtheorem{Corollary}{\bf Corollary}[section]
\newtheorem{Remark}{\bf Remark}[section]
\newtheorem{Example}{\bf Example}[section]
\newtheorem{Definition}{\bf Definition}[section]
\newtheorem{Conjecture}{\bf Conjecture}[section]
\newenvironment{theorem}{\begin{Theorem}$\!\!\!$}{\end{Theorem}}
\newenvironment{lemma}{\begin{Lemma}$\!\!\!$}{\end{Lemma}}
\newenvironment{corollary}{\begin{Corollary}$\!\!\!$}{\end{Corollary}}
\newenvironment{remark}{\begin{Remark}$\!\!\!$}{\end{Remark}}
\newenvironment{definition}{\begin{Definition}$\!\!\!$}{\end{Definition}}
\numberwithin{equation}{section}
\newcommand{\RN}{{\bf{R}}^N}
\def\Xint#1{\mathchoice
{\XXint\displaystyle\textstyle{#1}}%
{\XXint\textstyle\scriptstyle{#1}}%
{\XXint\scriptstyle\scriptscriptstyle{#1}}%
{\XXint\scriptscriptstyle\scriptscriptstyle{#1}}%
\!\int}
\def\XXint#1#2#3{{\setbox0=\hbox{$#1{#2#3}{\int}$}
\vcenter{\hbox{$#2#3$}}\kern-.5\wd0}}
\def\dashint{\Xint-}
\begin{document}
\title[The Hardy parabolic equation]
{Existence and nonexistence of solutions to \\the Hardy parabolic equation}

\author[K. Hisa]{Kotaro Hisa}
\address{Graduate School of Mathematical Sciences, 
The University of Tokyo, 3-8-1 Komaba, Meguro-ku, Tokyo 153-8914, Japan}
\email[Corresponding author]{khisa@ms.u-tokyo.ac.jp}
\thanks{The first author was supported 
by JSPS KAKENHI Grant Number JP19H05599.}

\author[M. Sier\.{z}\k{e}ga]{Miko{\l}aj Sier\.{z}\k{e}ga}
\address{Faculty of Mathematics, Informatics and Mechanics, University of Warsaw, Banacha 2, 02-097 Warsaw, Poland}
\email{m.sierzega@mimuw.edu.pl}
\thanks{The second author was supported by NCN grant 2017/26/D/ST1/00614}

\subjclass[2020]{
Primary 35K58; 
Secondary 35A01, 
35K15, 
35K67, 
35R11} %

\keywords{Hardy parabolic equation, Fractional Laplacian, Solvability}

\begin{abstract}
In this paper, we obtain necessary conditions and sufficient conditions on the initial data for the local-in-time solvability of the Cauchy problem
\[
\partial_t u +(-\Delta)^\frac{\theta}{2} u=|x|^{-\gamma} u^p ,\quad  x\in{\bf R}^N, t>0, \qquad
u(0)=\mu  \quad  \mbox{in} \quad {\bf R}^N,\vspace{3pt}
\]
where $N\ge 1$,  $0<\theta\le2$, $p>1$, $\gamma>0$ and $\mu$ is a nonnegative Radon measure on ${\bf R}^N$. 
Using these conditions, we attempt to identify the optimal strength of the singularity of $\mu$ for the existence of solutions to this problem.
\end{abstract}

\maketitle

\section{Introduction}
Consider nonnegative solutions to the Cauchy problem for the Hardy parabolic equation
\begin{equation}
\label{eq:1.1}
\left\{
\begin{array}{ll}
\partial_t u +(-\Delta)^\frac{\theta}{2} u=|x|^{-\gamma} u^p ,\quad & x\in{\bf R}^N,\,\,\,t>0,\vspace{3pt}\\
u(0)=\mu &   \mbox{in} \quad {\bf R}^N,\vspace{3pt}
\end{array}
\right.
\end{equation}
where $N\ge 1$,  $0<\theta\le2$, $p>1$, $\gamma>0$ and $\mu$ is a nonnegative Radon measure in ${\bf R}^N$. 
Here $(-\Delta)^{\theta/2}$ denotes the fractional power of the Laplace operator $-\Delta$ in $\RN$.
If $\gamma=0$, this equation is the Fujita-type equation.
Throughout this paper  we assume 
\[
0<\gamma<\min\{\theta,N\}.
\]
In the case of $0<\theta<2$, we assume the additional condition
\begin{equation}
\label{eq:1.2}
\gamma<\theta(p-1).
\end{equation}
In the case of $\theta=2$, \eqref{eq:1.2} is not necessary.
In this paper, we attempt to give necessary conditions and sufficient conditions for the local-in-time solvability of the Cauchy problem~\eqref{eq:1.1} and to identify the optimal strength of the singularity of $\mu$ for the existence of local-in-time solutions to \eqref{eq:1.1}.
The potential term $|x|^{-\gamma}$ promotes blow-up of  solutions to \eqref{eq:1.1} at the origin, on the other hand, its effect is hardly noticeable far from the origin. 
For this reason,
far from the origin, the optimal singularity of $\mu$ is expected to be same as that of the Fujita-type equation, 
while at the origin, it is expected to be weaker than that of the Fujita-type equation.
However, to our knowledge, there seem to be no results describing this prediction.

We recall the local-in-time solvability of the Fujita-type equation and the optimal singularity of its initial data.
Let us consider nonnegative solutions to the semilinear parabolic equation
\begin{equation}
\label{eq:fjt}
\partial_t v +(-\Delta)^\frac{\theta}{2} v= v^p ,\quad  x\in{\bf R}^N,\,\,\,t>0, \qquad
v(0)=\nu \quad  \mbox{in} \quad {\bf R}^N,\vspace{3pt}
\end{equation}
where $N\ge 1$,  $0<\theta\le2$, $p>1$ and $\nu$ is a nonnegative Radon measure in ${\bf R}^N$. 
The local-in-time solvability of Cauchy problem~\eqref{eq:fjt} has been studied in many papers (see e.g. \cite{AD, BP, BC, FI, HI01, IKK, IKO, IKS, KY, RS, SL01, SL02, T, W01, W02} and references therein).
Among them, Baras--Pierre~\cite{BP}
obtained  necessary conditions for the local-in-time solvability in the case of $\theta=2$.
Subsequently, the first author of this paper and Ishige~\cite{HI01} obtained a generalization to the case of $0<\theta\le2$.
Precisely, the following results have already been obtained.

\begin{itemize}
\item[(a)] Let $0<T<\infty$. Assume that problem~\eqref{eq:fjt} possesses a nonnegative solution. Then $\nu$ must satisfy the following:
\begin{itemize}
\item If $1<p<p_0$, then ${\sup_{x\in\RN} \nu(B(x,1))<\infty}$;
\item If $p=p_0$, then ${\sup_{x\in\RN} \nu(B(x,\sigma))<c_* \left[\log\left(e+\frac{T^\frac{1}{\theta}}{\sigma}\right)\right]^{-\frac{N}{\theta}}}$ for all $0<\sigma<T^\frac{1}{\theta}$;
\item If $p>p_0$, then ${\sup_{x\in\RN} \nu(B(x,\sigma))<c_* \sigma^{N-\frac{\theta}{p-1}}}$ for all $0<\sigma<T^\frac{1}{\theta}$.
\end{itemize}
Here $p_0:=1+\theta/N$ and $c_*$ is a constant depending only on $N$, $\theta$ and $p$.
\end{itemize}
Then, we can find a  large constant $C_*>0$ with the following property: Set
\begin{equation}
\label{OS}
\Psi(x) := 
\left\{
\begin{array}{ll}
\displaystyle{|x|^{-N}\left[\log\left(e+\frac{1}{|x|}\right)\right]^{-\frac{N}{\theta}-1}} & \mbox{if} \quad p=p_0,\vspace{3pt}\\
\displaystyle{|x|^{-\frac{\theta}{p-1}}},\quad & \mbox{if} \quad p>p_0.\vspace{3pt}\\
\end{array}
\right.
\end{equation}
\begin{itemize}
\item[(b)] Problem~\eqref{eq:fjt} possesses no local-in-time solutions if $\nu$ is a nonnegative measurable function in $\RN$ satisfying
$\nu(x)\ge C_*\Psi(x)$
in a neighborhood of the origin.
\end{itemize}
On the other hand, Kozono--Yamazaki \cite{KY}, Robinson and the second author of this paper \cite{RS} and the first author of this paper and Ishige \cite{HI01} obtained  sufficient conditions for the local-in-time solvability. These results proved that there exists a small constant $c_*>0$ 
such that
if $\nu$ satisfies
$0\le\nu(x)\le c_*\Psi(x)$ in $\RN$,
then problem~\eqref{eq:fjt} possesses a local-in-time solution.
By combining these conditions, we see that $\Psi(x)$ is the optimal singularity of  $\nu$ for the existence of local-in-time solutions to problem~\eqref{eq:fjt}.
We are interested in finding similar necessary conditions and sufficient conditions for the local-in-time solvability of the Cauchy problem~\eqref{eq:1.1} and identifying the optimal singularity of the initial data.

Now let us return to the Cauchy problem~\eqref{eq:1.1}.
This problem has been studied in \cite{AT,Ben, STW, C19, CIT, FT, H, MM, P, Qi, Tayachi, W}.
Among them, in the case of $\theta=2$, Ben Slimene--Tayachi--Weissler \cite{STW} proved that the Cauchy problem~\eqref{eq:1.1} is locally well-posed in $L^q(\RN)$ with a suitable $q\ge1$. Moreover, in the case of $p>1+(2-\gamma)/N$, they proved that if $\mu$ satisfies
\[
0\le\mu(x)\le c|x|^{-\frac{2-\gamma}{p-1}} \quad \mbox{in} \quad \RN
\]
for sufficiently small $c>0$, then \eqref{eq:1.1} possesses a global-in-time solution.
However, there seem to be no results on necessary conditions such as assertion~(a).
Further still, it seems that in the case of $0<\theta <2$ no results covering the sufficient conditions are available. 

In order to state our main results,
we introduce some notation and formulate the definition of solutions to \eqref{eq:1.1}.
For $x\in\RN$ and $r>0$, let $B(x,r):= \{y\in\RN:|x-y|<r\}$ and $|B(x,r)|$ be the volume of $B(x,r)$.
Furthermore, for $f\in L^1_{loc}(\RN)$, we set
\[
\dashint_{B(x,r)} f(y) \,dy := \frac{1}{|B(x,r)|}\int_{B(x,r)} f(y) \, dy.
\]
Let $G=G(x,t)$ be the fundamental solution to 
\begin{equation}
\label{heateq}
\partial_t v + (- \Delta)^\frac{\theta}{2} v =0 \quad \mbox{in} \quad \RN \times (0,\infty),
\end{equation}
where $0<\theta\le2$.

\begin{definition}
\label{Definition:1.1}
Let $u$ be a nonnegative measurable function in $\RN\times(0,T)$, where $0<T\le\infty$.
We say that $u$ is a solution to \eqref{eq:1.1} in $\RN\times[0,T)$ if $u$ satisfies
\begin{equation}
\label{eq:1.3}
\infty>u(x,t) = \int_{\RN} G(x-y,t)\, d\mu(y) + \int_0^t \int_{\RN} G(x-y, t-s) |y|^{-\gamma} u(y,s)^p \, dyds
\end{equation}
for almost all $x\in\RN$ and $0<t<T$.
\end{definition}
In what follows, define 
\[
p_\gamma:=1+\frac{\theta-\gamma}{N}
\]
for $N\ge1$, $0<\theta\le2$ and  $\gamma\ge0$.

Now we are ready to state the main results of this paper. In Theorem~\ref{Theorem:1.1} and \ref{Theorem:1.2} we obtain necessary conditions for the local-in-time solvability of the Cauchy problem~\eqref{eq:1.1}.

\begin{theorem}
\label{Theorem:1.1}
Let $u$ be a solution to \eqref{eq:1.1} in $\RN\times[0,T)$, where $0<T<\infty$.
Then, there exists a constant $C_1>0$ depending only on $N$, $\theta$, $p$ and $\gamma$, such that
\begin{equation}
\label{eq:1.5}
\sup_{z\in\RN} \left(\dashint_{B(z,\sigma)} |x|^\frac{\gamma}{p-1} \, dx\right)^{-1} \mu(B(z,\sigma)) \le C_1 \sigma^{N-\frac{\theta}{p-1}}
\end{equation}
for all $0<\sigma<T^\frac{1}{\theta}$. In particular, in the case of $p=p_\gamma$, there exists a constant $C_1'>0$ depending only on $N$, $\theta$ and $\gamma$, such that
\begin{equation}
\label{eq:1.6}
\mu(B(0,\sigma)) \le C_1'\left[\log\left(e+\frac{T^\frac{1}{\theta}}{\sigma}\right)\right]^{-\frac{N}{\theta-\gamma}}
\end{equation}
for all $0<\sigma<T^{1/\theta}$.
\end{theorem}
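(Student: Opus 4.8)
\emph{Tools.} The plan rests on the two pointwise lower bounds obtained from \eqref{eq:1.3} by keeping only one nonnegative summand,
\[
u(x,t)\ \ge\ \int_{\RN}G(x-y,t)\,d\mu(y),\qquad u(x,t)\ \ge\ \int_0^t\!\!\int_{\RN}G(x-y,t-s)\,|y|^{-\gamma}u(y,s)^p\,dy\,ds ,
\]
and on two standard facts about $G(x,t)=t^{-N/\theta}g(x\,t^{-1/\theta})$ ($g\ge0$, radially nonincreasing, $\int g=1$): (i) $G(x,t)\ge c_0 t^{-N/\theta}$ for $|x|\le R_0 t^{1/\theta}$; (ii) $\int_{B(w,\rho)}G(x-y,\tau)\,dy\ge\tfrac12$ whenever $x\in B(w,\rho(1-\varepsilon))$ and $0<\tau\le(\varepsilon\rho/R_1)^\theta$. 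Here $c_0,R_0,R_1$ depend only on $N,\theta$.

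\emph{The bound \eqref{eq:1.5}.} A covering argument reduces matters to $0<\sigma<c(N,\theta)\,T^{1/\theta}$. Fix $z$ and put $M:=\mu(B(z,\sigma))$. From the first lower bound and (i), $u\ge a_0:=c_1\sigma^{-N}M$ on a cylinder $B(z,\sigma)\times[s_0,s_1]$ with $s_0,s_1$ of order $\sigma^\theta$. I would then propagate this along a nested family of cylinders $B(z,\rho_k)\times[s_k,s_{k+1}]$: radii $\rho_0=\sigma$ with $\rho_k\downarrow\rho_\infty\ge\sigma/2$, shrinking by summable amounts $\asymp\sigma 2^{-k}$; times $s_k\uparrow s_\infty<T$ with increments $\delta_k\asymp\sigma^\theta 2^{-k\theta}$, chosen so that (ii) applies at each step. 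Inserting $u\ge a_k$ on the $k$-th cylinder into the second lower bound, estimating $|y|^{-\gamma}\ge(|z|+\sigma)^{-\gamma}$ on $B(z,\sigma)$ and invoking (ii), one gets $u\ge a_{k+1}$ on the $(k+1)$-st cylinder, where $a_{k+1}=\kappa_k a_k^p$ with $\kappa_k=\tfrac12\delta_k(|z|+\sigma)^{-\gamma}$. Solving the recursion, $a_k^{1/p^k}\to a_0 e^{\Lambda}$ with $\Lambda:=\sum_{k\ge0}p^{-1-k}\log\kappa_k$ finite and $e^{-\Lambda}\asymp\sigma^{-\theta/(p-1)}(|z|+\sigma)^{\gamma/(p-1)}$. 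If $a_0 e^{\Lambda}>1$, then $a_k\to\infty$ faster than any exponential; inserting the cylinders once more into the second lower bound at a single fixed time $t^*\in(s_\infty,T)$, at which $G$ is uniformly bounded below on the relevant region, forces $u(\cdot,t^*)\equiv+\infty$ on $B(z,\rho_\infty)$, contradicting Definition~\ref{Definition:1.1}. Hence $a_0\le e^{-\Lambda}$; unwinding this and using $(|z|+\sigma)^{\gamma/(p-1)}\asymp\dashint_{B(z,\sigma)}|x|^{\gamma/(p-1)}\,dx$ yields \eqref{eq:1.5}.

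\emph{The critical case $p=p_\gamma$ (the bound \eqref{eq:1.6}).} Here \eqref{eq:1.5} at $z=0$ only gives $\mu(B(0,\sigma))\le C$, so a sharper argument is needed. Put $M:=\mu(B(0,\sigma))$. The first lower bound gives $u(y,s)\ge c_0 s^{-N/\theta}M$ for $s\gtrsim\sigma^\theta$, $|y|\lesssim s^{1/\theta}$. Fixing $t<T$ and a centre $x_0$ with $|x_0|\lesssim t^{1/\theta}$, insert this into the second lower bound and split the $s$-integral over the shells $[e^{j}\sigma^\theta,e^{j+1}\sigma^\theta]$: at $p=p_\gamma$ the power $\theta-\gamma-N(p-1)$ of the scale produced by the $|y|^{-\gamma}$-integral together with the $s$-integration is $0$, so every shell contributes the same order. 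Consequently the estimate self-improves — each step multiplies the power of $M$ by $p$ and roughly multiplies by $p$ the power of $\log(t^{1/\theta}/\sigma)$ — and iterating $K=K(\sigma,t)$ times, with $K\to\infty$ as $\sigma\to0$ but $K$ below the number of usable shells ($\asymp\log(t^{1/\theta}/\sigma)$), one obtains, for a.e.\ such $x_0$,
\[
u(x_0,t)\ \gtrsim\ \alpha_K\, t^{-N/\theta}\,M^{p^K}\bigl[\log(t^{1/\theta}/\sigma)\bigr]^{(p^K-1)/(p-1)},
\]
where $\alpha_K>0$ and $\alpha_K^{1/p^K}$ tends to a positive constant depending only on $N,\theta,\gamma$. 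Now choose $x_0$ with $u(x_0,t)<\infty$ (possible on a set of positive measure), divide, take $p^K$-th roots, and let $\sigma\to0$: then $u(x_0,t)^{1/p^K}\to1$, $t^{N/(\theta p^K)}\to1$, $\alpha_K^{-1/p^K}$ tends to a constant, $[\log(t^{1/\theta}/\sigma)]^{-p^{-K}/(p-1)}\to1$, and $(1-p^{-K})/(p-1)\to1/(p-1)$, whence $M\le C[\log(t^{1/\theta}/\sigma)]^{-1/(p-1)}$ for all small $\sigma$. Letting $t\uparrow T$, treating $\sigma$ comparable to $T^{1/\theta}$ via the covering argument and \eqref{eq:1.5}, and recalling $1/(p-1)=N/(\theta-\gamma)$ and $\log(T^{1/\theta}/\sigma)\asymp\log(e+T^{1/\theta}/\sigma)$, we obtain \eqref{eq:1.6}.

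\emph{Main difficulty.} The delicate point in both parts is the constant bookkeeping: one must not close the argument simply by ``$u<\infty$'', which would produce a constant depending on $u$. Instead one tracks the double-exponential growth rate of the iterates, i.e.\ the limit of $a_k^{1/p^k}$, and — in the critical case — the precise way the powers of $\log(t^{1/\theta}/\sigma)$ and the multiplicative constants $\alpha_K$ accumulate along the iteration. The key observation in the critical case is that although only finitely many iteration steps are available, their number may still be chosen to tend to infinity as $\sigma\to0$, which is exactly what makes $u(x_0,t)^{1/p^K}\to1$. A secondary, purely technical, matter is the choice of the geometric sequences $\rho_k,\delta_k$ ensuring (ii) at every step while keeping $s_\infty<T$.
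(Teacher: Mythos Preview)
Your argument for \eqref{eq:1.5} is correct and takes a genuinely different route from the paper's. The paper never iterates for \eqref{eq:1.5}: it passes to the weak formulation~\eqref{eq:1.4}, inserts a rescaled cut-off $\zeta^s$ as a test function, uses Young's inequality together with the pointwise bound $(-\Delta)^{\theta/2}\zeta^s\le s\,\zeta^{s-1}(-\Delta)^{\theta/2}\zeta$ to absorb the nonlinear term, and reads off the estimate in one stroke. Your approach stays entirely with the mild formulation and reaches the same conclusion by a blow-up iteration on nested parabolic cylinders. The test-function method is shorter and more robust (it applies verbatim to weak supersolutions and more general potentials), while your iteration is self-contained at the level of the integral equation and is closer in spirit to the machinery needed for \eqref{eq:1.6}.

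For \eqref{eq:1.6}, however, there is a genuine gap. You correctly observe that a purely pointwise iteration can only be run $K\lesssim\log(t^{1/\theta}/\sigma)$ times (each step forces the admissible time range to shrink by a fixed factor, because the lower bound on $G(x-y,t-s)$ requires $s\le t/2$). After taking $p^K$-th roots the right-hand side carries the factor $u(x_0,t)^{1/p^K}$; this tends to $1$ as $\sigma\to0$, but the threshold below which it is, say, $\le2$ depends on the value $u(x_0,t)$ and hence on the particular solution. Consequently the range of ``small $\sigma$'' on which your log-bound holds --- and therefore the constant obtained after patching with \eqref{eq:1.5} on the complementary range --- depends on $u$, not only on $N,\theta,\gamma$. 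The paper avoids this by testing against the kernel rather than evaluating at a point: with $w(t)=\int G(x,t)\,u(x,t+(2\rho)^\theta)\,dx$ one has $w(t)<\infty$ for a.e.\ $t$, and the semigroup identity $\int G(x-y,t-s)G(x,t)\,dx=G(y,2t-s)$ lets the inner time-integral run all the way up to $t$. This yields the closed inequality
\[
w(t)\ \ge\ c_1 M\,t^{-N/\theta}\ +\ c_2\,t^{-N/\theta}\!\int_{\rho^\theta}^{t}\! s^{\frac{N}{\theta}-\frac{\gamma}{\theta}}\,w(s)^p\,ds
\]
on a \emph{fixed} time interval, so the iterated lower bounds $w(t)\ge f_k(t)$ hold there for \emph{every} $k$. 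Letting $k\to\infty$ with $\rho$ fixed then forces $\beta^p M\bigl[\log(T/(5\rho^\theta))\bigr]^{1/(p-1)}\le1$ with a universal $\beta$, which is exactly the step your bounded-$K$ scheme cannot reproduce.
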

Since the effect of the singularity $|x|^{-\gamma}$ is hardly noticeable away from the origin, the solution to \eqref{eq:1.1} is expected to behave like that to \eqref{eq:fjt}.
Theorem~\ref{Theorem:1.2} shows a necessary condition for the local-in-time solvability of the Cauchy problem~\eqref{eq:1.1} far from the origin in the case of $p=p_0$ (compare with (a) as above).

\begin{theorem}
\label{Theorem:1.2}
Let  $z\in\RN$ and $p=p_0$. Let $u$ be a solution to \eqref{eq:1.1} in $\RN\times[0,T)$, where $0<T<\infty$. Assume that $|z|>T^{1/\theta}$.
Then, there exists a constant $C_2>0$ depending only on $N$, $\theta$ and $\gamma$, such that
\begin{equation*}
\mu(B(z,\sigma)) \le C_2 |z|^\frac{\gamma}{p-1} \left[\log\left(e+\frac{T^\frac{1}{\theta}}{\sigma}\right)\right]^{-\frac{N}{\theta}}
\end{equation*}
for all $0<\sigma<T^\frac{1}{\theta}$.
\end{theorem}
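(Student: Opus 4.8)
The plan is to exploit the fact that away from the origin the weight $|x|^{-\gamma}$ is comparable to the constant $|z|^{-\gamma}$, so that near $z$ problem~\eqref{eq:1.1} behaves like the Fujita-type equation~\eqref{eq:fjt} with a constant coefficient of size $|z|^{-\gamma}$. The claimed bound is then precisely the critical-case necessary condition~(a) for~\eqref{eq:fjt}, rescaled by the factor $|z|^{\gamma/(p-1)}=|z|^{\gamma N/\theta}$ that such a rescaling produces, and I would obtain it by running the same Fujita-type iteration that underlies~(a) and Theorem~\ref{Theorem:1.1}, now centered at $z$.

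First I would dispose of the case where $\sigma$ is not small. If $\sigma^\theta\ge\epsilon_0 T$ for a fixed $\epsilon_0\in(0,1)$ (to be chosen later depending only on $N,\theta$), then $\log(e+T^{1/\theta}/\sigma)$ is bounded above by a constant depending only on $N$ and $\theta$, while Theorem~\ref{Theorem:1.1} applied with $p=p_0$ --- for which the exponent $N-\theta/(p-1)$ vanishes --- gives $\mu(B(z,\sigma))\le C_1\dashint_{B(z,\sigma)}|x|^{\gamma/(p-1)}\,dx\le C_1(|z|+\sigma)^{\gamma/(p-1)}\le C|z|^{\gamma/(p-1)}$, using $\sigma<T^{1/\theta}<|z|$. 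This already yields the assertion in that range, so from now on I may assume $\sigma^\theta<\epsilon_0 T$.

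Now set $M:=\mu(B(z,\sigma))$ and let $\nu$ be the restriction of $\mu$ to $B(z,\sigma)$. Since $|z|>T^{1/\theta}$, every $y\in B(z,T^{1/\theta})$ satisfies $|y|\le|z|+T^{1/\theta}<2|z|$, hence $|y|^{-\gamma}\ge 2^{-\gamma}|z|^{-\gamma}$ there. From~\eqref{eq:1.3} and $u\ge 0$ one reads off the two lower bounds $u(x,t)\ge\int_{\RN}G(x-y,t)\,d\nu(y)=:w_1(x,t)$ and $u(x,t)\ge 2^{-\gamma}|z|^{-\gamma}\int_0^t\int_{B(z,T^{1/\theta})}G(x-y,t-s)\,u(y,s)^p\,dy\,ds$; defining $w_{n+1}$ by substituting $w_n$ for $u$ in the second, a routine induction using monotonicity gives $w_n\le u<\infty$ a.e.\ for every $n$. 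The core of the proof is then the inductive lower bound: there should exist $\delta\in(0,1/2]$ and constants $c_1,c_2>0$ depending only on $N,\theta$ such that, for $\sigma^\theta\le t<T$ and $|x-z|\le\delta t^{1/\theta}$,
\[
w_n(x,t)\ \ge\ A_n\,M\,t^{-N/\theta},\qquad A_1=c_1,\qquad A_{n+1}\ \ge\ c_2\,|z|^{-\gamma}M^{p-1}\Big[\log\Big(e+\tfrac{t}{\sigma^\theta}\Big)\Big]\,A_n^{\,p}.
\]
The base case is the kernel estimate $G(x,t)\gtrsim t^{-N/\theta}$ for $|x|\lesssim t^{1/\theta}$. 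In the inductive step I would restrict the $y$-integral to $\{|y-z|\le\delta s^{1/\theta}\}$ and the $s$-integral to $(\sigma^\theta,t)$; since $p=p_0$ one has $(s^{-N/\theta})^p=s^{-N/\theta-1}$, while $\int_{\{|y-z|\le\delta s^{1/\theta}\}}G(x-y,t-s)\,dy\gtrsim\delta^N(s/t)^{N/\theta}$ for $s\le t/2$ --- here the smallness of $\delta$ is exactly what makes this shrinking ball lie inside the region where $G(x-y,t-s)\gtrsim t^{-N/\theta}$ --- so that the leftover $s$-integral $\int_{\sigma^\theta}^{t/2}s^{-1}\,ds$ supplies the logarithmic gain.

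Finally I would iterate. Choosing $t=t_0:=T/2$ (legitimate since $\sigma^\theta<\epsilon_0 T<T/2$) and writing $K:=c_2|z|^{-\gamma}M^{p-1}\log(e+T/(2\sigma^\theta))$, the recursion $A_{n+1}\ge K A_n^{\,p}$ gives $A_n\ge K^{-1/(p-1)}\big(c_1K^{1/(p-1)}\big)^{p^{n-1}}$, which tends to $\infty$ if $c_1K^{1/(p-1)}>1$; in that case $w_n(x,t_0)\to\infty$ on the positive-measure set $\{|x-z|\le\delta t_0^{1/\theta}\}$, contradicting $u(x,t_0)<\infty$ for a.e.\ such $x$. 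Hence $c_1K^{1/(p-1)}\le 1$, i.e.\ $M^{p-1}\le C|z|^{\gamma}[\log(e+T/(2\sigma^\theta))]^{-1}$; since $1/(p-1)=N/\theta$ and $\log(e+T/(2\sigma^\theta))\ge c\log(e+T^{1/\theta}/\sigma)$ when $\sigma^\theta<\epsilon_0 T$, this is exactly the asserted estimate. I expect the genuine obstacle to be the inductive lower bound displayed above: arranging, via the choice of $\delta$, that each iteration gains a true factor $\log(e+t/\sigma^\theta)$ rather than a mere constant, and carrying the fundamental-solution estimates through uniformly in the two regimes $\theta=2$ and $0<\theta<2$ (where $G$ has Gaussian decay versus the heavy tail $G(x,t)\sim t|x|^{-N-\theta}$).
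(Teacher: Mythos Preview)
Your overall strategy---bound $|y|^{-\gamma}\ge 2^{-\gamma}|z|^{-\gamma}$ on $B(z,T^{1/\theta})$ and then run the critical Fujita iteration centered at $z$---is sound and is in fact simpler than the route taken in the paper. The paper does not restrict the weight pointwise; it works with the averaged quantity $\overline{w}(t)=\int_{\RN}G(x,t)\,u(x+z,t+(2\rho)^\theta)\,dx$, applies H\"older to split off the weight, and then needs a separate lemma (Lemma~\ref{Lemma:4.1}) to control $\int_{\RN}G(y,s)|y+z|^{\gamma/(p-1)}\,dy$. Your pointwise lower bound on $|y|^{-\gamma}$, valid precisely because $|z|>T^{1/\theta}$, avoids that machinery entirely.

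There is, however, a real gap in the iteration as you have written it. You ask that the $A_n$ be constants, with $w_n(x,t)\ge A_n M t^{-N/\theta}$ for \emph{all} $\sigma^\theta\le t<T$, and simultaneously that $A_{n+1}\ge c_2|z|^{-\gamma}M^{p-1}\log(e+t/\sigma^\theta)\,A_n^{\,p}$ with a $t$-dependent right-hand side. These are incompatible: the inductive step feeds $w_n(\cdot,s)$ at times $s\in(\sigma^\theta,t/2)$ into the Duhamel integral, so if $A_n$ is a constant you can only claim $A_{n+1}\ge c_2|z|^{-\gamma}M^{p-1}\bigl[\inf_{s}\log(e+s/\sigma^\theta)\bigr]A_n^{\,p}$, and that infimum is $O(1)$. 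The recursion then becomes $A_{n+1}\ge c\,A_n^{\,p}$ with $c$ independent of $\sigma$, which forces only $M\le C|z|^{\gamma/(p-1)}$---exactly what \eqref{eq:1.5} already gives---and the decisive factor $[\log(e+T^{1/\theta}/\sigma)]^{-N/\theta}$ is lost.

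The repair is to keep the logarithm inside the inductive hypothesis, with a growing exponent: the correct claim, in your pointwise framework, is
\[
w_n(x,t)\ \ge\ a_n\,M^{p^{n-1}}\,|z|^{-\gamma\frac{p^{n-1}-1}{p-1}}\,t^{-N/\theta}\Bigl(\log\frac{t}{\sigma^\theta}\Bigr)^{\frac{p^{n-1}-1}{p-1}}
\]
for (say) $2\sigma^\theta\le t<T$ and $|x-z|\le\delta t^{1/\theta}$, with constants $a_n$ obeying $a_{n+1}\ge c\,(p-1)(p^n-1)^{-1}a_n^{\,p}$. This is exactly the structure of \eqref{eq:3.16}--\eqref{eq:3.19}; the integral $\int_{\sigma^\theta}^{t/2}(\log(s/\sigma^\theta))^{(p^n-p)/(p-1)}s^{-1}\,ds$ is what raises the log exponent by one step. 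Evaluating at $t\sim T$ and letting $n\to\infty$ then forces $M\,|z|^{-\gamma/(p-1)}\bigl(\log(T/\sigma^\theta)\bigr)^{1/(p-1)}\le C$, which, since $1/(p-1)=N/\theta$, is the assertion. The growing exponent on the logarithm is not a cosmetic refinement---without it the argument cannot deliver anything beyond Theorem~\ref{Theorem:1.1}.
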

From Theorems~\ref{Theorem:1.1} and \ref{Theorem:1.2}, we have the following remarks. 
\begin{remark}
There exists a constant $C_*>0$ with the following property:
\begin{itemize}
\item[\rm{(i)}]  Problem~\eqref{eq:1.1} possesses no local-in-time solution if $\mu$ is a nonnegative measurable function in $\RN$ satisfying
\begin{equation*}
\mu(x)\ge
\left\{
\begin{array}{ll}
\displaystyle{C_* |x|^{-N}\left[\log\left(e+\frac{1}{|x|}\right)\right]^{-\frac{N}{\theta-\gamma}-1}} & \mbox{if} \quad p=p_\gamma,\vspace{3pt}\\
\displaystyle{C_* |x|^{-\frac{\theta-\gamma}{p-1}}},\quad & \mbox{if} \quad p>p_\gamma,\vspace{3pt}\\
\end{array}
\right.
\end{equation*}
in the neighborhood of the origin.

\item[\rm{(ii)}] Let $z\in\RN\setminus\{0\}$. Problem~\eqref{eq:1.1} possesses no local-in-time solution if $\mu$ is a nonnegative measurable function in $\RN$ satisfying
\begin{equation*}
\mu(x)\ge  C_* |z|^\frac{\gamma}{p-1} \Psi(x)
\end{equation*}
in the neighborhood of $z$.
\end{itemize}
\end{remark}
\begin{remark}
Let $\mu = \delta_z$ in $\RN$, where $\delta_z$ is the Dirac measure concentrated at $z\in\RN$. Then, the following holds:
\begin{itemize}
\item If $p_\gamma\le p $ and $z=0$, problem~\eqref{eq:1.1} possesses no local-in-time solution;

\item If $p_0\le p$ and $z\in\RN$, problem~\eqref{eq:1.1} possesses no local-in-time solution.
\end{itemize}
\end{remark}
The proof of \eqref{eq:1.5} is based on \cite[Theorem 1.1]{HIT} and \cite[Proposition 1]{KK}.
In this proof, we consider the weak solution to \eqref{eq:1.1} and give an upper bound for $\mu$ by substituting a suitable test function. 
On the other hand, the proofs of \eqref{eq:1.6} and Theorem~\ref{Theorem:1.2} are based on \cite[Lemma~3.2]{HI01}, which proved a necessary condition in the case of $\gamma=0$. Let $u$ be a solution (in the sense of Definition~\ref{Definition:1.1}) to \eqref{eq:1.1} in $\RN\times[0,T)$ and $z\in\RN$.
Following \cite{HI01}, we employ an iteration argument to get a lower estimate related to
\[
\int_{\RN} G(x,t) u(x+z,t) \, dx,
\]
and we prove \eqref{eq:1.6} and Theorem~\ref{Theorem:1.2}.
In particular, in the proof of Theorem~\ref{Theorem:1.2}, 
in order to apply an argument used previously for the Fujita-type  equation \cite{HI01},
we have to estimate the potential term $|x|^{-\gamma}$ properly, and  assumption $T^{1/\theta}<|z|$ plays an important role in it.
Moreover, it is important to estimate the integral 
\[
\int_{\RN} G(y,t) |y+z|^\frac{\gamma}{p-1} \, dy
\]
from above for $t>0$, and the assumption \eqref{eq:1.2} guarantees that this value is finite (see \eqref{eq:2.2} below).

We give a sufficient condition for the local-in-time solvability of problem~\eqref{eq:1.1}.

\begin{theorem}
\label{Theorem:1.3}
Let $p>p_\gamma$, $\alpha>1$, $r>1$ and $T>0$. Assume that $r>1$ satisfies
\begin{equation}
\label{eq:1.8}
r\in \left(\frac{N(p-1)}{\theta-\gamma}-\epsilon, \frac{N(p-1)}{\theta-\gamma}\right)
\end{equation} 
for sufficiently small $\epsilon>0$.
 Then there exists a constant $C_3>0$, depending only on $N$, $\theta$, $p$, $\gamma$ and $r$, such that, if $\mu$ is a nonnegative measurable function in $\RN$ satisfying 
\begin{equation}
\label{eq:1.9}
\sup_{z\in\RN} \left(\dashint_{B(z,\sigma)} \mu(y)^r \, dy \right)^\frac{1}{r} \le C_3 \sigma^{-\frac{\theta-\gamma}{p-1}}
\end{equation}
for all $0<\sigma<T^{1/\theta}$, then problem~\eqref{eq:1.1} possesses a solution in $\RN\times[0,T)$.
\end{theorem}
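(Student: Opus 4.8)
The plan is to construct the solution of \eqref{eq:1.3} by a fixed point (equivalently, monotone iteration) argument applied to the map
\[
\Phi[u](x,t):=S(t)\mu(x)+\int_0^t S(t-s)\bigl(\,|\cdot|^{-\gamma}u(\cdot,s)^p\,\bigr)(x)\,ds,
\]
where $S(t):=e^{-t(-\Delta)^{\theta/2}}$ is the fractional heat semigroup, so that $S(t)f=G(\cdot,t)*f$ and a nonnegative fixed point of $\Phi$ is exactly a solution of \eqref{eq:1.1} in the sense of Definition~\ref{Definition:1.1}. I would run the argument in a Banach space $X=X_T$ of nonnegative functions on $\RN\times(0,T)$ carrying a time-weighted, uniformly-local $L^r$ norm — schematically
\[
\|u\|_X\ \sim\ \sup_{0<t<T}\ t^{\beta}\ \sup_{z\in\RN}\Bigl(\dashint_{B(z,t^{1/\theta})}|u(y,t)|^{r}\,dy\Bigr)^{1/r},
\]
possibly augmented by a second component at a higher integrability exponent so that powers of $u$ return to the space (this is where the auxiliary parameter $\alpha>1$ enters), with the time exponent chosen by the parabolic scaling $u\mapsto\lambda^{(\theta-\gamma)/(p-1)}u(\lambda\,\cdot\,,\lambda^{\theta}\,\cdot\,)$ of \eqref{eq:1.1}, namely $\beta=\frac{\theta-\gamma}{\theta(p-1)}$, so that the weight matches the right-hand side of \eqref{eq:1.9}. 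The hypothesis \eqref{eq:1.8}, placing $r$ in a short interval just below the scaling-critical value $N(p-1)/(\theta-\gamma)$, is exactly what makes all the exponents appearing below admissible. For the linear part one combines the standard $L^q$--$L^{\rho}$ smoothing estimates $\|S(t)f\|_{L^{\rho}}\le C\,t^{-\frac{N}{\theta}(\frac1q-\frac1\rho)}\|f\|_{L^{q}}$ (and their uniformly-local analogues, got by summing over a lattice of balls) with \eqref{eq:1.9} to obtain $S(\cdot)\mu\in X$ with $\|S(\cdot)\mu\|_X\le C_0C_3$; taking $C_3$ small makes the linear part an arbitrarily small element of $X$.

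The heart of the proof is the estimate of the Duhamel term $\int_0^t S(t-s)\bigl(|\cdot|^{-\gamma}u(\cdot,s)^p\bigr)\,ds$ and the proof that $\Phi$ maps a small ball of $X$ into itself and is a contraction there. For fixed $0<s<t$ I would bound the uniformly-local $L^r$ norm of $S(t-s)\bigl(|\cdot|^{-\gamma}u(\cdot,s)^p\bigr)$ by first applying the smoothing estimate from a suitable intermediate exponent $q$ up to $r$, after using H\"older's inequality to split $|y|^{-\gamma}u(y,s)^p$ into the factor $|y|^{-\gamma}$, which lies in the weak Lebesgue space $L^{N/\gamma,\infty}(\RN)$ since $0<\gamma<N$ and is bounded away from the origin, and the factor $u(y,s)^p$, controlled by $\|u(s)\|_X^p$ and a power $s^{-p\beta}$ of the time weight. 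Here the standing assumptions $0<\gamma<\min\{\theta,N\}$ and, when $\theta<2$, condition \eqref{eq:1.2} are used to keep the exponents in range, and every constant must be tracked uniformly in the center $z$. This produces a pointwise-in-$s$ bound whose $s$-integral over $(0,t)$ is finite precisely because the placement of $r$ below $N(p-1)/(\theta-\gamma)$ in \eqref{eq:1.8} controls the behaviour near $s=t$, while the supercriticality $p>p_\gamma$ controls the behaviour near $s=0$; the exponent identity is arranged so that multiplying the result by $t^{\beta}$ reproduces the norm of $X$. Replacing $u^p-v^p$ by $p(u^{p-1}+v^{p-1})(u-v)$ and repeating the same estimates gives the contraction property.

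By the Banach fixed point theorem — or, since $\Phi$ is order preserving, by the monotone iteration $u_0=S(\cdot)\mu$, $u_{k+1}=\Phi[u_k]$, which is nondecreasing and bounded in $X$ — the map $\Phi$ has a nonnegative fixed point $u\in X$; membership in $X$ forces $u<\infty$ almost everywhere, so $u$ is a solution of \eqref{eq:1.1} in $\RN\times[0,T)$. The step I expect to be the main obstacle is precisely this Duhamel estimate: choosing the intermediate exponent $q$ and the higher exponent governed by $\alpha$, checking that the resulting time-singularities at $s=0$ and $s=t$ are both integrable under the combined hypotheses \eqref{eq:1.8}, $p>p_\gamma$ and \eqref{eq:1.2}, and carrying out the uniformly-local H\"older and smoothing estimates with constants uniform in $z$ while simultaneously handling the singularity of $|y|^{-\gamma}$ at the origin and its boundedness far from it.
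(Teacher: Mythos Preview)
Your contraction-mapping sketch is viable, but the paper takes a different, more direct route: rather than setting up a time-weighted uniformly-local Banach space and proving $\Phi$ is a contraction, it writes down an explicit supersolution ansatz
\[
W(t)=S(t)\mu+\rho(t)\bigl(S(t)\mu^r\bigr)^{1/(r\alpha')},\qquad \rho(t)=t^{\,1-\frac{\gamma}{\theta}-\frac{\theta-\gamma}{\theta(p-1)}\left(p-\frac{1}{\alpha'}\right)},
\]
with $\alpha$ chosen close to $N/\gamma$, and simply checks that $S(t)\mu+\int_0^t S(t-s)\bigl(|\cdot|^{-\gamma}W(s)^p\bigr)\,ds\le W(t)$; Lemma~\ref{Lemma:2.2} (monotone iteration) then produces the solution. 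The verification uses exactly the ingredients you list---H\"older to split off $|y|^{-\gamma}$ (with exponent $\alpha<N/\gamma$), Jensen on the probability kernel $G(\cdot,t-s)$, the semigroup identity $S(t-s)S(s)=S(t)$, and Lemma~\ref{Lemma:2.1} applied to $\mu^r$ together with \eqref{eq:1.9} to get $\|S(s)\mu^r\|_{L^\infty}^{1/r}\le CC_3\,s^{-(\theta-\gamma)/(\theta(p-1))}$---and the integrability in $s$ is guaranteed by the conditions \eqref{eq:1.8} and $p>p_\gamma$, exactly as you anticipate. The advantage of the paper's approach is that the pointwise correction term $\bigl(S(t)\mu^r\bigr)^{1/(r\alpha')}$ is preserved under the Duhamel operator via Jensen and the semigroup property, so the whole computation closes in one step without any Banach-space machinery, uniformly-local smoothing estimates, or contraction argument. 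Your approach would buy, in exchange for more bookkeeping, a solution that comes equipped with membership in a concrete function space and, via the contraction, uniqueness in that class.
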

As a corollary of Theorem~\ref{Theorem:1.3}, we have
\begin{corollary}
\label{Corollary:1.1}
Let $p>p_\gamma$.
There exists a constant $c_*>0$ such that
if $\mu$ satisfies
\begin{equation}
\label{eq:1.12}
\displaystyle{0\le\mu(x)\le c_* |x|^{-\frac{\theta-\gamma}{p-1}}} \quad \mbox{in} \quad \RN,
\end{equation}
then problem~\eqref{eq:1.1} possesses a local-in-time solution.
\end{corollary}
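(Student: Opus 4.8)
The plan is to deduce Corollary~\ref{Corollary:1.1} directly from Theorem~\ref{Theorem:1.3} by checking that the pointwise bound \eqref{eq:1.12} implies the averaged bound \eqref{eq:1.9} for a suitable choice of the parameters. Fix $p>p_\gamma$. Since $p-1>(\theta-\gamma)/N$, we have $N(p-1)/(\theta-\gamma)>1$, so for all sufficiently small $\epsilon>0$ the interval in \eqref{eq:1.8} lies in $(1,\infty)$ and is nonempty; let $\epsilon>0$ be as in Theorem~\ref{Theorem:1.3}, fix any $r$ satisfying \eqref{eq:1.8}, fix $\alpha>1$ arbitrarily, and fix $T=1$ (any positive $T$ will do). Set
\[
\beta:=\frac{r(\theta-\gamma)}{p-1}.
\]
The upper endpoint in \eqref{eq:1.8} says exactly $r<N(p-1)/(\theta-\gamma)$, i.e. $\beta<N$, so $|y|^{-\beta}\in L^1_{loc}(\RN)$; this is the sole point where the constraint on $r$ is used.

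The key (elementary) step is the uniform-in-$z$ estimate
\[
\sup_{z\in\RN}\ \dashint_{B(z,\sigma)} |y|^{-\beta}\,dy \ \le\ C_4\,\sigma^{-\beta},\qquad \sigma>0,
\]
for some constant $C_4=C_4(N,\beta)$. This follows by splitting into cases: if $|z|\ge 2\sigma$ then $|y|\ge|z|-\sigma\ge\sigma$ on $B(z,\sigma)$, so the integrand is bounded by $\sigma^{-\beta}$ and the average is $\le\sigma^{-\beta}$; if $|z|<2\sigma$ then $B(z,\sigma)\subset B(0,3\sigma)$, and $\int_{B(0,3\sigma)}|y|^{-\beta}\,dy=c_{N,\beta}(3\sigma)^{N-\beta}$ (finite since $\beta<N$), which after dividing by $|B(z,\sigma)|=|B(0,1)|\sigma^N$ gives a bound $\lesssim\sigma^{-\beta}$. (Alternatively, the Hardy--Littlewood rearrangement inequality reduces the supremum to the centered ball $z=0$ at once.)

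Combining this with \eqref{eq:1.12}, for every $z\in\RN$ and $\sigma>0$,
\[
\left(\dashint_{B(z,\sigma)}\mu(y)^r\,dy\right)^{\frac1r}
\le c_*\left(\dashint_{B(z,\sigma)}|y|^{-\beta}\,dy\right)^{\frac1r}
\le c_*\,C_4^{\frac1r}\,\sigma^{-\frac{\beta}{r}}
= c_*\,C_4^{\frac1r}\,\sigma^{-\frac{\theta-\gamma}{p-1}}.
\]
Hence, choosing $c_*:=C_3\,C_4^{-1/r}$ with $C_3$ the constant provided by Theorem~\ref{Theorem:1.3} for the above data $N,\theta,p,\gamma,r$, the hypothesis \eqref{eq:1.9} is satisfied for all $0<\sigma<T^{1/\theta}$ (indeed for all $\sigma>0$). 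Theorem~\ref{Theorem:1.3} then yields a solution to \eqref{eq:1.1} in $\RN\times[0,T)$, i.e. the asserted local-in-time solution.

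There is no substantive obstacle in this argument: the only thing to watch is that \eqref{eq:1.9} is required uniformly over all centers $z$, in particular at $z=0$ where the singularity of $|x|^{-(\theta-\gamma)/(p-1)}$ is located, and it is precisely the range restriction $r<N(p-1)/(\theta-\gamma)$ in \eqref{eq:1.8} (equivalently $\beta<N$) that keeps the corresponding average finite and of the correct order $\sigma^{-\beta}$.
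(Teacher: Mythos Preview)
Your argument is correct and is precisely the intended deduction: the paper states Corollary~\ref{Corollary:1.1} as an immediate consequence of Theorem~\ref{Theorem:1.3} without giving a separate proof, and verifying \eqref{eq:1.9} from the pointwise bound \eqref{eq:1.12} via the elementary estimate $\sup_{z}\dashint_{B(z,\sigma)}|y|^{-\beta}\,dy\le C\sigma^{-\beta}$ (valid since $\beta=r(\theta-\gamma)/(p-1)<N$) is exactly the step being left to the reader. One cosmetic remark: the parameter $\alpha>1$ in the statement of Theorem~\ref{Theorem:1.3} plays no role in its conclusion (the constant $C_3$ depends only on $N,\theta,p,\gamma,r$), so you need not ``fix $\alpha>1$ arbitrarily'' at all.
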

By Corollary~\ref{Corollary:1.1}, we see that the singularity of $\mu$ as in \eqref{eq:1.12} is the optimal one at the origin in the case of $p>p_\gamma$.
However, in other cases, the optimal singularity has not been obtained.

The rest of this paper is organized as follows. 
In Section~2 we collect some properties of the fundamental solution $G$ and prepare some preliminary lemmas. 
In Section~3 we prove Theorem~\ref{Theorem:1.1}.
In Section~4 we prove Theorem~\ref{Theorem:1.2}.
In Section~5 we prove Theorem~\ref{Theorem:1.3}.

\section{Preliminaries}
In this section, we collect some properties of the fundamental solution $G$ to \eqref{heateq} 
and prepare preliminary lemmas. 
In what follows 
the letter $C$ denotes a generic positive constant depending only on $N$, $\theta$, $p$ and $\gamma$.
 
Let $N\ge1$ and $0<\theta\le2$.
The fundamental solution $G$ to \eqref{heateq} is a positive and smooth function in $\RN\times(0,\infty)$ and has the following properties: 
\begin{eqnarray}
\label{eq:2.1}
 & & G(x,t)=t^{-\frac{N}{\theta}}G\left(t^{-\frac{1}{\theta}}x,1\right),\\
\label{eq:2.2}
 & & C^{-1}(1+|x|)^{-N-\theta}\le G(x,1)\le C(1+|x|)^{-N-\theta}\quad\mbox{ if $0<\theta<2$},\\
\label{eq:2.3}
 & &  \mbox{$G(\cdot,1)$ is radially symmetric and $G(x,1)\le G(y,1)$ if $|x|\ge |y|$},\\
\label{eq:2.4}
 & & G(x,t)=\int_{{\bf R}^N}G(x-y,t-s)G(y,s)dy,\\
\label{eq:2.5}
 & & \int_{{\bf R}^N}G(x,t)\,dx=1,
\end{eqnarray}
for all $x$, $y\in{\bf R}^N$ and $0<s<t$.
For any $\phi\in L^1_{loc}(\RN)$, we identify $\phi$ with the Radon measure $\phi\,dx$.
For any Radon measure $\mu$ in $\RN$, we define
\[
[S(t)\mu](x) := \int_{\RN} G(x-y,t) \,d\mu(y), \quad x\in\RN, t>0.
\]
Furthermore, we have the following lemmas. 

\begin{lemma}
\label{Lemma:2.1}
There exists a constant $C>0$ such that
\begin{equation*}
\|S(t)\mu\|_{L^\infty(\RN)} \le C t^{-\frac{N}{\theta}} \sup_{z\in\RN} \mu({B(x,t^\frac{1}{\theta})})
\end{equation*}
for any Radon measure $\mu$ in $\RN$ and $t>0$.
\end{lemma}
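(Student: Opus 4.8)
The plan is to reduce the estimate to a dyadic annular decomposition of $\RN$ around the point $x$ where we evaluate $S(t)\mu$, and on each annulus use the scaling identity \eqref{eq:2.1} together with the decay of $G(\cdot,1)$. First I would fix $t>0$ and $x\in\RN$ and write
\[
[S(t)\mu](x)=\int_{\RN}G(x-y,t)\,d\mu(y)=\int_{|x-y|<t^{1/\theta}}G(x-y,t)\,d\mu(y)+\sum_{j=0}^\infty\int_{A_j}G(x-y,t)\,d\mu(y),
\]
where $A_j:=\{y:2^jt^{1/\theta}\le|x-y|<2^{j+1}t^{1/\theta}\}$. On each piece I would use \eqref{eq:2.1} to write $G(x-y,t)=t^{-N/\theta}G(t^{-1/\theta}(x-y),1)$, and then \eqref{eq:2.3} (radial monotonicity of $G(\cdot,1)$) to bound $G(t^{-1/\theta}(x-y),1)\le G(2^j e_1,1)$ on $A_j$ (and $\le G(0,1)=\|G(\cdot,1)\|_\infty$ on the central ball). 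Crucially, $A_j$ is covered by a bounded number (depending only on $N$) of balls of radius $t^{1/\theta}$, so $\mu(A_j)\le C_N\sup_{z}\mu(B(z,t^{1/\theta}))$; call this supremum $M$.

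The estimate then collapses to
\[
[S(t)\mu](x)\le C_N t^{-N/\theta}M\Bigl(G(0,1)+\sum_{j=0}^\infty G(2^j e_1,1)\Bigr),
\]
so it remains to check that $\sum_{j\ge0}G(2^je_1,1)<\infty$. For $0<\theta<2$ this is immediate from \eqref{eq:2.2}, which gives $G(2^je_1,1)\le C(1+2^j)^{-N-\theta}\le C 2^{-j(N+\theta)}$, a convergent geometric series. For $\theta=2$ the kernel is the Gaussian $G(x,1)=(4\pi)^{-N/2}e^{-|x|^2/4}$, and $\sum_j e^{-4^{j}/4}$ converges trivially (in fact one has Gaussian rather than merely polynomial decay, which is more than enough). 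In both cases the sum is a finite constant depending only on $N$ and $\theta$, and absorbing it into $C$ yields $\|S(t)\mu\|_{L^\infty(\RN)}\le Ct^{-N/\theta}M$, which is the claim. (I note the statement's $\mu(B(x,t^{1/\theta}))$ should read $\sup_{z\in\RN}\mu(B(z,t^{1/\theta}))$, matching the displayed supremum.)

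There is no real obstacle here; the only point requiring a little care is the combinatorial estimate $\mu(A_j)\le C_N M$, i.e. that a dyadic annulus of radius $\sim 2^jt^{1/\theta}$ and thickness $t^{1/\theta}$ can be covered by $C_N$ balls of radius $t^{1/\theta}$ with $C_N$ independent of $j$ — this follows from a standard volume/packing argument since the annulus has measure comparable to $2^{j(N-1)}t^{N/\theta}$ while the geometry forces the covering number to grow only like $2^{j(N-1)}$, and this growth is killed by the decay of $G(2^je_1,1)$. Everything else is bookkeeping with \eqref{eq:2.1}--\eqref{eq:2.3}.
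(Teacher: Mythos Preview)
The paper does not actually prove this lemma; it simply cites \cite[Lemma~2.1]{HI01}. So there is no in-paper argument to compare against, and your dyadic-annulus approach is a perfectly reasonable way to supply a self-contained proof. However, the execution contains a genuine error.

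Your claim that the dyadic annulus $A_j=\{y:2^jt^{1/\theta}\le|x-y|<2^{j+1}t^{1/\theta}\}$ is covered by a number of balls of radius $t^{1/\theta}$ that is \emph{independent of $j$} is false: the annulus has thickness $2^jt^{1/\theta}$ (not $t^{1/\theta}$), hence volume $\asymp 2^{jN}t^{N/\theta}$, and a standard covering argument gives a covering number $\asymp 2^{jN}$, not $C_N$ and not $2^{j(N-1)}$ as you write in your final paragraph. Consequently the displayed inequality
\[
[S(t)\mu](x)\le C_N\,t^{-N/\theta}M\Bigl(G(0,1)+\sum_{j\ge 0}G(2^je_1,1)\Bigr)
\]
is not what your decomposition yields; the correct bound is
\[
[S(t)\mu](x)\le C\,t^{-N/\theta}M\Bigl(G(0,1)+\sum_{j\ge 0}2^{jN}G(2^je_1,1)\Bigr).
\]
Fortunately this is still finite: for $0<\theta<2$, \eqref{eq:2.2} gives $2^{jN}G(2^je_1,1)\le C\,2^{jN}(1+2^j)^{-N-\theta}\le C\,2^{-j\theta}$, which sums; for $\theta=2$ the Gaussian factor dominates any polynomial growth. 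So the strategy is sound and the lemma follows once you correct the covering count. Alternatively, you could avoid the issue entirely by using linear annuli $\{j\,t^{1/\theta}\le|x-y|<(j+1)t^{1/\theta}\}$, each of which genuinely has thickness $t^{1/\theta}$ and covering number $\asymp j^{N-1}$, leading to the summable series $\sum_j j^{N-1}(1+j)^{-N-\theta}$.
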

\begin{proof}
This lemma was proved in \cite[Lemma~2.1]{HI01}.
\end{proof}

\begin{lemma}
\label{Lemma:2.2}
Let $\mu$ be a nonnegative Radon measure in ${\bf R}^N$ and $0<T\le\infty$. Assume that there exists a supersolution $v$ to \eqref{eq:1.1} in $\RN\times[0,T)$. Then, there exists a solution to \eqref{eq:1.1} in $\RN\times[0,T)$.
\end{lemma}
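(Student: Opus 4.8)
The plan is to construct the desired solution by monotone (Picard) iteration, using the given supersolution $v$ as a barrier from above. First I would set $u_0(x,t):=[S(t)\mu](x)$ and define inductively
\[
u_{n+1}(x,t):=[S(t)\mu](x)+\int_0^t\int_{\RN}G(x-y,t-s)\,|y|^{-\gamma}u_n(y,s)^p\,dy\,ds,\qquad n\ge0,
\]
for $(x,t)\in\RN\times(0,T)$. Since $0<\gamma<N$, the weight $|y|^{-\gamma}$ is locally integrable, so together with the properties of $G$ collected in \eqref{eq:2.1}--\eqref{eq:2.5} and Lemma~\ref{Lemma:2.1}, each $u_n$ is a well-defined nonnegative measurable function; a.e.\ finiteness will follow from the comparison with $v$ established below.

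Next I would prove, by induction on $n$, that $u_{n-1}\le u_n\le v$ almost everywhere. For the base case, because $v$ is a supersolution it satisfies
\[
v(x,t)\ge[S(t)\mu](x)+\int_0^t\int_{\RN}G(x-y,t-s)\,|y|^{-\gamma}v(y,s)^p\,dy\,ds\ge[S(t)\mu](x)=u_0(x,t)
\]
for a.e.\ $(x,t)$, and $u_0\le u_1$ since the added double integral is nonnegative. For the inductive step, assuming $u_{n-1}\le u_n\le v$ a.e., the monotonicity of $r\mapsto r^p$ on $[0,\infty)$ gives $u_n\le u_{n+1}$, while replacing $u_n$ by $v$ under the integral and invoking the supersolution inequality yields $u_{n+1}\le v$. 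Hence $\{u_n\}$ is nondecreasing in $n$ and dominated pointwise a.e.\ by $v$, which is finite a.e.

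Therefore $u:=\lim_{n\to\infty}u_n$ exists a.e., is nonnegative and measurable, and satisfies $u\le v<\infty$ a.e. Applying the monotone convergence theorem to the inner $\RN$-integral and then to the $(0,t)$-integral in the recursion, and passing to the limit on both sides, I obtain
\[
u(x,t)=[S(t)\mu](x)+\int_0^t\int_{\RN}G(x-y,t-s)\,|y|^{-\gamma}u(y,s)^p\,dy\,ds
\]
for a.e.\ $(x,t)\in\RN\times(0,T)$. Since the right-hand side is dominated by $v(x,t)<\infty$ a.e., the integral term is finite a.e., so $u$ meets all the requirements of Definition~\ref{Definition:1.1} and is a solution to \eqref{eq:1.1} in $\RN\times[0,T)$.

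I expect the only delicate point to be the bookkeeping of measurability and a.e.\ finiteness along the iteration: one must check that the double integrals defining $u_{n+1}$ genuinely make sense and that Tonelli's theorem applies (this is where $\gamma<N$ and the bounds \eqref{eq:2.2} on $G$ enter), and one must make sure the notion of supersolution in force is exactly the integral inequality displayed above together with a.e.\ finiteness. Beyond that, the argument is the standard monotone iteration and presents no further difficulty.
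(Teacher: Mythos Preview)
Your proposal is correct and follows essentially the same approach as the paper: both construct the solution by monotone Picard iteration starting from $S(t)\mu$, use the supersolution $v$ as an upper barrier to obtain a pointwise nondecreasing sequence bounded above by $v$, and pass to the limit to obtain the integral identity. The paper's proof is terser (it states the inductive chain of inequalities without spelling out the base case or the monotone convergence step), but there is no substantive difference in method.
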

\begin{proof}
Set $u_1:=S(t)\mu$. Define $u_k \, (k=2,3,\cdots)$ inductively by
\begin{equation}
\label{eq:2.7}
u_k(t):=S(t)\mu+\int_0^tS(t-s)|\cdot|^{-\gamma}u_{k-1}(s)^p.
\end{equation}
Let $v$ be a supersolution to \eqref{eq:1.1} in $\RN\times[0,T)$, where $0<T\le\infty$. Then, it follows inductively that
\[
0\le u_1(x,t) \le u_2(x,t)\le \cdots \le u_k(x,t) \le \cdots \le v(x,t) <\infty
\]
for almost all $x\in\RN$ and $t\in(0,T)$. This implies that
\[
u(x,t):=\lim_{k\to\infty} u_k(x,t)
\]
is well-defined for almost all $x\in\RN$ and $t\in(0,T)$.
Furthermore, by \eqref{eq:2.7}, we see that $u$ satisfies \eqref{eq:1.3} for almost all $x\in\RN$ and $t\in(0,T)$.  
Thus, Lemma~\ref{Lemma:2.2} follows.
\end{proof}
\section{Proof of Theorem~\ref{Theorem:1.1}}
In this section, we prove Theorem~\ref{Theorem:1.1}.
First, we prove \eqref{eq:1.5}. 
The proof is based on the argument in \cite[Theorem 1.1]{HIT} and \cite[Proposition 1]{KK}.
Therefore, we note the following remark.
\begin{remark}
Let $0<T<\infty$.
If $u$ satisfies \eqref{eq:1.3}, then $u$ also satisfies
\begin{equation}
\label{eq:1.4}
\int_0^T \int_{\RN} (u(-\varphi_t+ (-\Delta)^\frac{\theta}{2} \varphi) - |x|^{-\gamma} u^p \varphi) \,dxdt = \int_{\RN} \varphi(0) \, d\mu
\end{equation}
for all $\varphi\in C^\infty_0(\RN \times [0,T])$ with $\varphi(T)=0$.
\end{remark}
The proof of \eqref{eq:1.5} relies on substituting a suitable test function into \eqref{eq:1.4}.
\begin{proof}[Proof of \eqref{eq:1.5}]
Let $u$ be a solution to \eqref{eq:1.1} in $\RN\times[0,T)$, where $0<T<\infty$. For $z\in\RN$ and $\sigma\in(0,T^{1/\theta})$, let $\zeta:\RN\times[0,\infty)\to[0,1]$ be a $C^\infty_0$-function which satisfies
\begin{equation*}
\zeta(x,t)= 
\left\{
\begin{array}{ll}
1 & \quad\mbox{in} \quad \overline{B(z,{2^{-{1/\theta}}\sigma})}\times[0,2^{-1}\sigma^\theta],\vspace{3pt}\\
0 & \quad\mbox{outside} \quad B(z,\sigma)\times[0,\sigma^\theta).\vspace{3pt}
\end{array}
\right.
\end{equation*}
By substituting $\varphi(x,t)=\zeta(x,t)^s$ as a test function in \eqref{eq:1.4}, where $s$ is an integer and satisfies $s>p/(p-1)$, we get
\begin{equation}
\label{eq:3.1}
\begin{split}
&-s\int_0^{\sigma^\theta} \int_{B(z,\sigma)} u \zeta_t \zeta^{s-1} \, dxdt
+\int_0^{\sigma^\theta} \int_{\RN} u (-\Delta)^\frac{\theta}{2} \zeta^s \, dxdt\\
& = \int_0^{\sigma^\theta}\int_{B(z,\sigma)} |x|^{-\gamma} u^p \zeta^s \, dxdt + 
\int_{B(z,\sigma)} \zeta(0)^s \,d\mu.
\end{split}
\end{equation}
It follows from \eqref{eq:3.1} and the Young inequality that
\begin{equation*}
\begin{split}
&\int_0^{\sigma^\theta}\int_{B(z,\sigma)} |x|^{-\gamma} u^p \zeta^s \, dxdt + 
\int_{B(z,\sigma)} \zeta(0)^s \,d\mu\\
& = -s\int_0^{\sigma^\theta} \int_{B(z,\sigma)} u \zeta_t \zeta^{s-1} \, dxdt
+\int_0^{\sigma^\theta} \int_{\RN} u (-\Delta)^\frac{\theta}{2} \zeta^s \, dxdt\\
& \le -s\int_0^{\sigma^\theta} \int_{B(z,\sigma)} u \zeta_t \zeta^{s-1} \, dxdt
+s \int_0^{\sigma^\theta} \int_{B(z,\sigma)} u \zeta^{s-1} (-\Delta)^\frac{\theta}{2} \zeta \, dxdt\\
& \le  C \int_0^{\sigma^\theta} \int_{B(z,\sigma)} u |\zeta_t| \zeta^{s-1} \, dxdt
+ C \int_0^{\sigma^\theta} \int_{B(z,\sigma)} u \zeta^{s-1} |(-\Delta)^\frac{\theta}{2} \zeta| \, dxdt\\
& \le \int_0^{\sigma^\theta} \int_{B(z,\sigma)} |x|^{-\gamma} u^p\zeta^s \, dxdt
+ C \int_0^{\sigma^\theta} \int_{B(z,\sigma)} |x|^\frac{\gamma}{p-1} |\zeta_t|^\frac{p}{p-1} \zeta^{s-\frac{p}{p-1}} \, dxdt\\
& \quad \quad \quad \quad \quad \quad \quad \quad + C \int_0^{\sigma^\theta} \int_{B(z,\sigma)} |x|^\frac{\gamma}{p-1} |(-\Delta)^\frac{\theta}{2}\zeta|^\frac{p}{p-1} \zeta^{s-\frac{p}{p-1}} \, dxdt.
\end{split}
\end{equation*}
Here, we also used the inequality $(-\Delta)^{\theta/2}\zeta^s \le s \zeta^{s-1} (-\Delta)^{\theta/2} \zeta$ (see \cite{Ju} for details). Since $s>p/(p-1)$,  we have
\begin{equation}
\label{eq:3.2}
\begin{split}
&\int_{B(z,\sigma)} \zeta(0)^s \,d\mu\\
& \le C \int_0^{\sigma^\theta} \int_{B(z,\sigma)} |x|^\frac{\gamma}{p-1} |\zeta_t|^\frac{p}{p-1}  \, dxdt
+ C \int_0^{\sigma^\theta} \int_{B(z,\sigma)} |x|^\frac{\gamma}{p-1} |(-\Delta)^\frac{\theta}{2}\zeta|^\frac{p}{p-1}  \, dxdt.
\end{split}
\end{equation}
Now, we choose in \eqref{eq:3.2} the function $\zeta(x,t) = \psi(\sigma^{-\theta}t)\xi(\sigma^{-1}x)$, where
$\psi:[0,\infty)\to [0,1]$ is a smooth function which satisfies
\[
\psi(t) = 1 \quad \mbox{on} \quad [0,2^{-1}], \qquad \psi(t) = 0 \quad \mbox{outside} \quad [0,1)
\]
and $\xi:\RN \to [0,1]$ is a smooth function which satisfies 
\[
\xi(x) = 1 \quad \mbox{in} \quad \overline{B(z,2^{-\frac{1}{\theta}})}, \qquad 
\xi(x) = 0 \quad \mbox{in} \quad B(z,1).
\]
Since the functions $\psi$ and $\xi$ can be chosen such that
\begin{equation}
\label{eq:3.3}
|\partial_t \psi(\sigma^{-\theta} t)| \le C \sigma^{-\theta} \quad \mbox{and} \quad 
|(-\Delta)^\frac{\theta}{2} \xi(\sigma^{-1}x)| \le C\sigma^{-\theta}, 
\end{equation}
\eqref{eq:3.2} with \eqref{eq:3.3} yields
\[
\mu(B(z,2^{-\frac{1}{\theta}}\sigma)) \le C\sigma^{-\frac{\theta }{p-1}} \int_{B(z,\sigma)} |x|^{\frac{\gamma}{p-1}}\,dx
\]
for all $z\in\RN$ and $0<\sigma<T^\frac{1}{\theta}$.
Then, we can find a positive constant $m$ depending only $N$, $\theta$, $p$ and $\gamma$, such that
\begin{equation*}
\begin{split}
&\sup_{z\in\RN}\left(\dashint_{B(z,\sigma)} |x|^{\frac{\gamma}{p-1}}\,dx\right)^{-1} \mu(B(z,\sigma))\\
&\le m \sup_{z\in\RN}\left(\dashint_{B(z,\sigma)} |x|^{\frac{\gamma}{p-1}}\,dx\right)^{-1} \mu(B(z,2^{-\frac{1}{\theta}}\sigma)) \le Cm \sigma^{N-\frac{\theta }{p-1}}
\end{split}
\end{equation*}
for all $0<\sigma<T^\frac{1}{\theta}$. Therefore, we obtain the desired estimate and the proof of \eqref{eq:1.5} is complete.
\end{proof}
In order to prove \eqref{eq:1.6} and complete the proof of Theorem~\ref{Theorem:1.1}, we prepare the following lemma, which has been obtained in \cite[Lemma~3.2]{HI01}.

\begin{lemma}
\label{Lemma:3.1}
Let $u$ be a solution to \eqref{eq:1.1} in $\RN\times[0,T)$, where $0<T<\infty$. Let $z\in\RN$ and $\rho>0$ with $(2\rho)^\theta<T$. Then, there exists a constant $c_*>0$ depending only on $N$ such that 
\begin{equation}
\label{eq:3.4}
u(x+z,(2\rho)^\theta)\ge c_* G(x,\rho^\theta)\mu(B(z,\rho)) 
\end{equation}
for almost all $x\in\RN$.
\end{lemma}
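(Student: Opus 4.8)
\textbf{Proof proposal for Lemma~\ref{Lemma:3.1}.}
The plan is to exploit the integral representation \eqref{eq:1.3} of the solution $u$, keeping only the first (linear) term and throwing away the nonnegative Duhamel term, and then to estimate $[S(t)\mu]$ from below by a single Gaussian-type bump centred at $z$. First I would write, using \eqref{eq:1.3} at time $t=(2\rho)^\theta$ and shifting by $z$,
\[
u(x+z,(2\rho)^\theta)\ge \int_{\RN} G(x+z-y,(2\rho)^\theta)\,d\mu(y)\ge \int_{B(z,\rho)} G(x+z-y,(2\rho)^\theta)\,d\mu(y),
\]
so it suffices to bound $G(x+z-y,(2\rho)^\theta)$ from below by $c_*G(x,\rho^\theta)$ uniformly for $y\in B(z,\rho)$. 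By the scaling identity \eqref{eq:2.1}, $G(w,(2\rho)^\theta)=2^{-N}\rho^{-N}G(2^{-1}\rho^{-1}w,1)$ and $G(x,\rho^\theta)=\rho^{-N}G(\rho^{-1}x,1)$, so after renaming $\xi:=\rho^{-1}x$ and $\eta:=\rho^{-1}(z-y)\in B(0,1)$ the claim reduces to the scale-free inequality
\[
G\!\left(\tfrac{1}{2}(\xi-\eta),1\right)\ge c\,G(\xi,1)\qquad\text{for all }\xi\in\RN,\ |\eta|\le1,
\]
with $c_*=2^{-N}c$.

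The scale-free inequality is where the actual content lies, and I expect it to be the main (though modest) obstacle. For $0<\theta<2$ it follows immediately from the two-sided bound \eqref{eq:2.2}: one has $G(\tfrac12(\xi-\eta),1)\ge C^{-1}(1+\tfrac12|\xi-\eta|)^{-N-\theta}\ge C^{-1}(1+\tfrac12(|\xi|+1))^{-N-\theta}$, and comparing this with $G(\xi,1)\le C(1+|\xi|)^{-N-\theta}$ gives a uniform constant since $1+\tfrac12(|\xi|+1)\le \tfrac32(1+|\xi|)$. For $\theta=2$ the Gaussian $G(x,1)=(4\pi)^{-N/2}e^{-|x|^2/4}$ must be handled directly: $G(\tfrac12(\xi-\eta),1)/G(\xi,1)=\exp\!\big(\tfrac14|\xi|^2-\tfrac1{16}|\xi-\eta|^2\big)$, and since $|\xi-\eta|\le|\xi|+1$ the exponent is bounded below by $\tfrac14|\xi|^2-\tfrac1{16}(|\xi|+1)^2\ge \tfrac{3}{16}|\xi|^2-\tfrac18|\xi|-\tfrac1{16}\ge -\tfrac1{16}-\tfrac18$, which is a finite lower bound; hence the ratio is bounded below by a positive absolute constant. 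In either case $c_*$ depends only on $N$ (and the universal shape of $G$, which in turn is determined by $\theta$), as claimed in the statement.

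Finally I would assemble the pieces: from the displayed reduction,
\[
u(x+z,(2\rho)^\theta)\ge \int_{B(z,\rho)} c_*\,G(x,\rho^\theta)\,d\mu(y)=c_*\,G(x,\rho^\theta)\,\mu(B(z,\rho))
\]
for every $x$ for which the representation \eqref{eq:1.3} holds, i.e. for almost all $x\in\RN$, which is exactly \eqref{eq:3.4}. The hypothesis $(2\rho)^\theta<T$ is used only to guarantee that $(2\rho)^\theta$ lies in the interval $(0,T)$ on which \eqref{eq:1.3} is valid. Since this lemma is quoted from \cite[Lemma~3.2]{HI01}, one may alternatively simply cite that reference, the argument above being the natural self-contained proof.
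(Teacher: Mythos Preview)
Your proof is correct; the paper itself does not prove Lemma~\ref{Lemma:3.1} but simply cites \cite[Lemma~3.2]{HI01}, and your self-contained argument---dropping the nonnegative Duhamel term from \eqref{eq:1.3} and then reducing via the scaling \eqref{eq:2.1} to a scale-free pointwise comparison of $G(\cdot,1)$---is exactly the natural proof one would expect to find there. The only cosmetic point is that in your substitution $\eta:=\rho^{-1}(z-y)$ the argument of $G$ becomes $\tfrac12(\xi+\eta)$ rather than $\tfrac12(\xi-\eta)$, but since $G(\cdot,1)$ is radial by \eqref{eq:2.3} and $|\eta|\le 1$ either way, this is immaterial.
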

Now we are ready to prove \eqref{eq:1.6}. The proof is based on the argument in \cite[Lemma~3.3]{HI01}.
\begin{proof}[Proof of \eqref{eq:1.6}]
We assume  $p=p_\gamma$.
Let $\nu>0$ be a sufficiently small constant and $\rho$ be such that
\[
0<\rho<(\nu T)^\frac{1}{\theta}.
\]
Set $v(x,t) := u(x, t+(2\rho)^\theta)$ for almost all $x\in\RN$ and $t \in (0, T-(2\rho)^\theta)$. Since $u$ is a solution to \eqref{eq:1.1} in $\RN\times[0,T)$, it follows from \eqref{eq:1.3} that for $0<\tau<T-(2\rho)^\theta$,
\begin{equation}
\label{eq:3.5}
v(x,t) = \int_{\RN} G(x-y,t-\tau)v(y,\tau) \, dy + \int_\tau^t \int_{\RN} G(x-y, t-s) |y|^{-\gamma} v(y,s)^p \, dyds
\end{equation}
holds for almost all $x\in\RN$, $t\in (\tau, T-(2\rho)^\theta)$. 
In the case of $0<\theta<2$, by \eqref{eq:2.1} and \eqref{eq:2.2} we have
\[
G(x-t,\tau) \ge C\tau^{-\frac{N}{\theta}} \left(1+\frac{|x|+|y|}{\tau^\frac{1}{\theta}}\right)^{-N-\theta} \ge C\tau^{-\frac{N}{\theta}} \left(2+\frac{|y|}{\tau^\frac{1}{\theta}}\right)^{-N-\theta} \ge CG(y,\tau)
\]
for all $x\in\RN$ with $|x|<\tau^\frac{1}{\theta}$, $y\in\RN$ and $\tau>0$. 
This time \eqref{eq:3.5} with $t=2\tau$ gives 
\begin{equation}
\label{eq:3.6}
\int_{\RN} G(y,\tau) v(y,\tau) \, dy < \infty
\end{equation}
for almost all $\tau\in(0,[T-(2\rho)^\theta]/2)$.
On the other hand, in the case of $\theta=2$, we have
\[
G(x-y, 2\tau) \ge (8\pi t)^{-\frac{N}{2}} \exp\left(-\frac{2|x|^2+2|y|^2}{8\tau}\right) \ge C (4\pi t)^{-\frac{N}{2}} \exp\left(-\frac{|y|}{4\tau}\right) = CG(y,\tau)
\]
for all $x\in\RN$ with $|x|<\tau^\frac{1}{\theta}$, $y\in\RN$ and $\tau>0$. 
Then \eqref{eq:3.5} with $t=3\tau$ yields 
\begin{equation}
\label{eq:3.7}
\int_{\RN} G(y,\tau) v(y,\tau) \, dy < \infty
\end{equation}
for almost all $\tau\in(0,[T-(2\rho)^\theta]/3)$.
Furthermore, by \eqref{eq:2.4}, \eqref{eq:3.4} and \eqref{eq:3.5} with $\tau=0$ we have
\begin{equation}
\label{eq:3.8}
\begin{split}
&v(x,t)-\int_0^t \int_{\RN} G(x-y, t-s) |y|^{-\gamma} v(y,s)^p \, dyds\\
&\ge c_*\mu(B(0,\rho)) \int_{\RN}G(x-y,t)G(y,\rho^\theta)\,dy\\
&= c_* \mu(B(0,\rho)) G(x,t+\rho^\theta)
\end{split}
\end{equation}
for almost all $x\in\RN$ and $0<t<T-(2\rho)^\theta$, where $c_*$ is the constant in Lemma~\ref{Lemma:3.1}.
Set 
\[
w(t):= \int_{\RN} G(x,t) v(x,t) \,dx.
\]
By \eqref{eq:3.6} and \eqref{eq:3.7}, we see that $w(t)<\infty$ for almost all $t\in(0,[T-(2\rho)^\theta]/3)$. Then, it follows from \eqref{eq:2.4} and \eqref{eq:3.8} that
\begin{equation}
\label{eq:3.9}
\begin{split}
\infty>w(t)
&\ge c_*\mu(B(0,\rho))\int_{\RN} G(x,t+\rho^\theta)G(x,t) \, dx\\
&+ \int_{\RN} \int_0^t \int_{\RN} G(x-y,t-s)G(x,t)|y|^{-\gamma}v(y,s)^p \, dydsdx\\
&\ge c_* \mu(B(0,\rho)) G(0,2t+\rho^\theta) + \int_{\rho^\theta}^t \int_{\RN} G(y,2t-s)|y|^{-\gamma}v(y,s)^p \, dyds
\end{split}
\end{equation}
for almost all $\rho^\theta<t<[T-(2\rho)^\theta]/3$. Now it follows from \eqref{eq:2.1} and \eqref{eq:2.3} that
\begin{equation}
\label{eq:3.10}
\begin{split}
G(y,2t-s) 
&= (2t-s)^{-\frac{N}{\theta}} G\left(\frac{y}{(2t-s)^\frac{1}{\theta}},1\right)\\
&\ge \left(\frac{s}{2t}\right)^{\frac{N}{\theta}}s^{-\frac{N}{\theta}} G\left(\frac{y}{s^\frac{1}{\theta}},1\right)
= \left(\frac{s}{2t}\right)^{\frac{N}{\theta}} G(y.s)\\
\end{split}
\end{equation}
for $y\in\RN$ and $0<s<t$. By \eqref{eq:2.1}, \eqref{eq:3.9} and \eqref{eq:3.10}, we obtain
\begin{equation}
\label{eq:3.11}
\begin{split}
\infty>w(t)
&\ge c_* \mu(B(0,\rho)) G(0,2t+\rho^\theta) + \int_{\rho^\theta}^t \left(\frac{s}{2t}\right)^{\frac{N}{\theta}} \int_{\RN} G(y,s)|y|^{-\gamma}v(y,s)^p \, dyds\\
&\ge C\mu(B(0,\rho))t^{-\frac{N}{\theta}} + \int_{\rho^\theta}^t \left(\frac{s}{2t}\right)^{\frac{N}{\theta}} \int_{\RN} G(y,s)|y|^{-\gamma}v(y,s)^p \, dyds\\
\end{split}
\end{equation}
for almost all $\rho^\theta<t<[T-(2\rho)^\theta]/3$.
By the H\"{o}lder inequality, we have
\begin{equation}
\label{eq:3.12}
\int_{\RN}G(y,s)|y|^{-\gamma}v(y,s)^p \, dy \ge \left(\int_{\RN} G(y,s) |y|^\frac{\gamma}{p-1} \, dy\right)^{-(p-1)} w(s)^p.
\end{equation}
In the case of $\theta=2$, by direct computations we obtain
\begin{equation}
\label{eq:3.13}
\int_{\RN} G(y,s) |y|^\frac{\gamma}{p-1} \, dy \le Cs^\frac{\gamma}{2(p-1)}.
\end{equation}
On the other hand, in the case of $0<\theta<2$, by virtue of assumption~\eqref{eq:1.2} we see that
\begin{equation}
\label{eq:3.14}
\begin{split}
\int_{\RN} G(y,s) |y|^\frac{\gamma}{p-1} \, dy 
&\le Cs^{-\frac{N}{\theta}} \int_{\RN} \left(1+\frac{|y|}{s^\frac{1}{\theta}}\right)^{-N-\theta} |y|^\frac{\gamma}{p-1}\,dy\\
&= Cs^\frac{\gamma}{\theta(p-1)} \int_{\RN} (1+|y|)^{-N-\theta} |y|^\frac{\gamma}{p-1} \, dy\\
&\le Cs^\frac{\gamma}{\theta(p-1)}.
\end{split}
\end{equation}
By \eqref{eq:3.11}, \eqref{eq:3.12}, \eqref{eq:3.13} and \eqref{eq:3.14}, we get
\begin{equation}
\label{eq:3.15}
\infty>w(t)\ge c_1\mu(B(0,\rho))t^{-\frac{N}{\theta}} + c_2t^{-\frac{N}{\theta}}\int_{\rho^\theta}^t s^{\frac{N}{\theta}-\frac{\gamma}{\theta}} w(s)^p\,ds
\end{equation}
for almost all  $\rho^\theta<t<[T-(2\rho)^\theta]/3$, where $c_1>0$ and $c_2>0$ are constants depending only on $N$, $\theta$, $p$ and $\gamma$.

For $k=1,2,\cdots,$ we define the sequence $\{a_k\}$ inductively as
\begin{equation}
\label{eq:3.16}
a_1 := c_1, \qquad a_{k+1} := c_2 a_k^p \frac{p-1}{p^k-1} \quad (k=1,2,\cdots).
\end{equation}
Furthermore, set
\begin{equation}
\label{eq:3.17}
f_k(t) := a_k\mu(B(0,\rho))^{p^{k-1}} t^{-\frac{N}{\theta}} \left(\log\frac{t}{\rho^\theta}\right)^{\frac{p^{k-1}-1}{p-1}}, \quad k=1,2,\cdots.
\end{equation}
We claim that
\begin{equation}
\label{eq:3.18}
w(t)\ge f_k(t), \quad k=1,2,\cdots,
\end{equation}
for almost all $\rho^\theta<t<[T-(2\rho)^\theta]/3$.
By \eqref{eq:3.15}, we see that \eqref{eq:3.18} holds for $k=1$.
We assume that \eqref{eq:3.18} holds with some $k\in\{1,2,\cdots\}$.
Then, due to \eqref{eq:3.15}, we infer  that
\begin{equation*}
\begin{split}
w(t)
&\ge c_2 t^{-\frac{N}{\theta}}\int_{\rho^\theta}^t s^{\frac{N}{\theta}-\frac{\gamma}{\theta}}f_k(s)^p \, ds\\
&= c_2 t^{-\frac{N}{\theta}}\int_{\rho^\theta}^t s^{\frac{N}{\theta}-\frac{\gamma}{\theta}}\left[a_k\mu(B(0,\rho))^{p^{k-1}} s^{-\frac{N}{\theta}} \left(\log\frac{s}{\rho^\theta}\right)^{\frac{p^{k-1}-1}{p-1}}\right]^p \, ds\\
&= c_2 a_k^p \mu(B(0,\rho))^{p^k} t^{-\frac{N}{\theta}} \int_{\rho^\theta}^t s^{-1}\left(\log\frac{s}{\rho^\theta}\right)^{\frac{p^{k}-p}{p-1}} \, ds \\
&= c_2 a_k^p \frac{p-1}{p^k-1} \mu(B(0,\rho))^{p^k} t^{-\frac{N}{\theta}} \left(\log\frac{t}{\rho^\theta}\right)^{\frac{p^{k}-1}{p-1}}\\
&= a_{k+1}  \mu(B(0,\rho))^{p^k}  t^{-\frac{N}{\theta}} \left(\log\frac{t}{\rho^\theta}\right)^{\frac{p^{k}-1}{p-1}} = f_{k+1}(t)
\end{split}
\end{equation*}
for almost all $\rho^\theta<t<[T-(2\rho)^\theta]/3$. Therefore, we conclude that that \eqref{eq:3.18} holds for all $k=1,2,\cdots$.

Next, we claim that there exists a constant $\beta>0$ such that
\begin{equation}
\label{eq:3.19}
a_k\ge \beta^{p^k}, \quad k=1,2,\cdots.
\end{equation}
Set $b_k:=-p^{-k} \log_{a_k}$. We prove that there exists a constant $C>0$ such that $b_k\le C$. By \eqref{eq:3.16}, we see that
\[
-\log{a_{k+1}}= -p\log{a_k} + \log \left[c_2\frac{p-1}{p^k-1}\right].
\]
This implies that
\begin{equation}
\label{eq:3.20}
b_{k+1}- b_{k} = p^{-k-1} \log \left[c_2\frac{p-1}{p^k-1}\right] \le C p^{-k-1}(k+1), \quad k=1,2,\cdots
\end{equation}
for some constant $C>0$. By \eqref{eq:3.20} we see that
\[
b_{k+1}= b_1 + \sum_{j=1}^{k}(b_{j+1}-b_j) \le b_1 + C \sum_{j=1}^{k}p^{-j-1}(j+1) \le C
\]
for $k=1,2,\cdots$. This implies  \eqref{eq:3.19}. Taking a sufficiently small $\nu$ if necessary, by \eqref{eq:3.17}, \eqref{eq:3.18} and \eqref{eq:3.19} we see that
\begin{equation*}
\begin{split}
\infty>w(t)\ge f_{k+1}(t) 
&\le \left[\beta^p \mu(B(0,\rho)) \left(\log\frac{t}{\rho^\theta}\right)^\frac{1}{p-1} \right]^{p^k} t^{-\frac{N}{\theta}}\left( \log \frac{t}{\rho^\theta}\right)^{-\frac{1}{p-1}}\\
&\le \left[\beta^p \mu(B(0,\rho)) \left(\log\frac{T}{5\rho^\theta}\right)^\frac{1}{p-1} \right]^{p^k} t^{-\frac{N}{\theta}}\left( \log \frac{t}{\rho^\theta}\right)^{-\frac{1}{p-1}}, \quad k=1,2,\cdots
\end{split}
\end{equation*}
for almost all $T/5<t<T/4$. Then it follows that
\[
\beta^p\mu(B(0,\rho))\left[\log\frac{T}{5\rho^\theta}\right]^\frac{1}{p-1} \le1,
\]
which in turn implies that 
\begin{equation}
\label{eq:3.21}
\mu(B(0,\rho)) \le C \left[\log\frac{T}{5\rho^\theta}\right]^{-\frac{1}{p-1}} \le C \left[\log\frac{T}{\rho^\theta}\right]^{-\frac{1}{p-1}} \le C \left[\log\left(e+\frac{T}{\rho^\theta}\right)\right]^{-\frac{1}{p-1}}
\end{equation}
for $0<\rho<(\nu T)^{1/\theta}$. 
By \eqref{eq:1.5} there exists a constant $C_*>0$ such that 
\begin{equation*}
\mu(B(0,\rho)) \le C_* 
\end{equation*}
for $(\nu T)^{1/\theta}\le\rho<T^{1/\theta}$. 
Since $p=p_\gamma$, we see that
\begin{equation}
\label{eq:3.22}
\left[\log\left(e+\frac{T}{\rho^\theta}\right)\right]^{-\frac{1}{p-1}}
\ge \left[\log\left(e+\nu^{-\frac{1}{\theta}}\right)\right]^{-\frac{1}{p-1}}
= C
\ge \frac{C}{C_*} \mu(B(0,\rho))
\end{equation}
for $(\nu T)^{1/\theta}\le\rho<T^{1/\theta}$. 
Combining \eqref{eq:3.21} and \eqref{eq:3.22}, we obtain
\[
\mu(B(0,\rho)) \le C \left[\log\left(e+\frac{T}{\sigma^\theta}\right)\right]^{-\frac{1}{p-1}}
\]
for all $0<\sigma<T^{1/\theta}$.
Therefore, we obtain the desired result and the proof of Theorem~\ref{Theorem:1.1} is complete.
\end{proof}
\section{Proof of Theorem~\ref{Theorem:1.2}}
In this section, we prove Theorem~\ref{Theorem:1.2}.
The proof is based on  the argument found in \cite[Lemma~3.3]{HI01}. 
The following lemma is the key to the proof.
\begin{lemma}
\label{Lemma:4.1}
Assume the same conditions as in Theorem~{\rm\ref{Theorem:1.2}}. Then there exists a constant $C>0$ depending only on $N$, $\theta$ and $\gamma$ such that
\begin{equation}
\label{eq:4.1}
\int_{\RN} G(y,s) |y+z|^\frac{\gamma}{p-1}\, dy \le C\rho^{-N} \int_{B(z,\rho)} |y|^\frac{\gamma}{p-1} \, dy
\end{equation}
for almost all $\rho^\theta<s<T/3$.
\end{lemma}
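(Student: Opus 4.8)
The plan is to prove \eqref{eq:4.1} by sandwiching both sides between constant multiples of $|z|^{\gamma/(p-1)}$. Set $q:=\gamma/(p-1)$; since $p=p_0=1+\theta/N$ in the setting of Theorem~\ref{Theorem:1.2}, one has $q=N\gamma/\theta$, so $q>0$ and, when $0<\theta<2$, $q<\theta$ -- the latter being exactly assumption~\eqref{eq:1.2}. This is the point where \eqref{eq:1.2} enters: it guarantees (via \eqref{eq:2.2} when $0<\theta<2$, and the Gaussian decay of $G(\cdot,1)$ when $\theta=2$) that $C_0:=\int_{\RN}G(w,1)\,|w|^q\,dw$ is finite and depends only on $N$, $\theta$ and $\gamma$.

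First I would estimate the left-hand side. The substitution $y=s^{1/\theta}w$ together with the scaling identity \eqref{eq:2.1} gives
\[
\int_{\RN}G(y,s)\,|y+z|^q\,dy=\int_{\RN}G(w,1)\,|s^{1/\theta}w+z|^q\,dw,
\]
and then the elementary inequality $|a+b|^q\le 2^q(|a|^q+|b|^q)$ together with \eqref{eq:2.5} yields
\[
\int_{\RN}G(y,s)\,|y+z|^q\,dy\le 2^q C_0\, s^{q/\theta}+2^q|z|^q.
\]
Since $|z|>T^{1/\theta}$ and $s<T/3$, we have $s^{q/\theta}<(T/3)^{q/\theta}<|z|^q$, so the left-hand side of \eqref{eq:4.1} is at most $C|z|^q$ with $C$ depending only on $N$, $\theta$ and $\gamma$.

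Next I would bound the right-hand side from below. The constraints $\rho^\theta<s<T/3$ and $|z|>T^{1/\theta}$ force $\rho<(T/3)^{1/\theta}<3^{-1/\theta}|z|$, so every $y\in B(z,\rho)$ satisfies $|y|\ge|z|-\rho\ge(1-3^{-1/\theta})|z|$ with $1-3^{-1/\theta}>0$; consequently
\[
\rho^{-N}\int_{B(z,\rho)}|y|^q\,dy\ge (1-3^{-1/\theta})^q\,|B(0,1)|\,|z|^q.
\]
Combining the two estimates proves \eqref{eq:4.1} for all $\rho^\theta<s<T/3$, with a constant depending only on $N$, $\theta$ and $\gamma$. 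None of these steps is technically demanding; the only point that genuinely needs care is the finiteness of $C_0$ in the range $0<\theta<2$, which is precisely what hypothesis~\eqref{eq:1.2} is there to secure (and is already flagged in the introduction).
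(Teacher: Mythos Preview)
Your argument is correct. You obtain the estimate by showing directly that both sides of \eqref{eq:4.1} are comparable to $|z|^{\gamma/(p-1)}$: the left-hand side via the scaling \eqref{eq:2.1}, the elementary bound $|a+b|^q\le 2^q(|a|^q+|b|^q)$, and the finiteness of $C_0=\int_{\RN}G(w,1)|w|^q\,dw$ (which, as you note, is exactly where \eqref{eq:1.2} is used when $0<\theta<2$); the right-hand side via the simple observation that $\rho<3^{-1/\theta}|z|$ forces $|y|\ge(1-3^{-1/\theta})|z|$ on $B(z,\rho)$. All constants indeed depend only on $N$, $\theta$, $\gamma$.

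The paper's proof reaches the same conclusion by a more elaborate route. It first splits the left-hand side over $B(0,s^{1/\theta})$ and its complement, shows that the outer part is controlled by the inner part (this is where the ratio $(|z|+s^{1/\theta})/(|z|-s^{1/\theta})$ enters and is bounded using $|z|>T^{1/\theta}$, $s<T/3$), and then in a second step rescales the inner integral from scale $s^{1/\theta}$ down to scale $\rho$ and compares it with $\rho^{-N}\int_{B(z,\rho)}|y|^{\gamma/(p-1)}\,dy$ via another bounded ratio $(|z|+s^{1/\theta})/(|z|-\rho)$. In effect the paper is also proving that both sides are comparable to $|z|^{\gamma/(p-1)}$, but never states this explicitly and carries the averages throughout. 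Your argument is shorter and more transparent; the paper's decomposition, on the other hand, keeps the structure closer to the iteration in the proof of Theorem~\ref{Theorem:1.2} and would adapt more readily if one wanted a version of \eqref{eq:4.1} with the ball average on the right rather than merely a power of $|z|$.
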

\begin{proof}
The proof is divided  into two steps.\\
\underline{\bf 1st step.}
We prove that
\[
\int_{\RN} G(y,s) |y+z|^\frac{\gamma}{p-1}\, dy  \le C \int_{B(0,s^\frac{1}{\theta})} G(y,s) |y+z|^\frac{\gamma}{p-1} \, dy
\]
for almost all $\rho^\theta<s<T/3$.
For this purpose, it is sufficient to show that
\begin{equation}
\label{eq:4.2}
\int_{B(0,s^\frac{1}{\theta})^c} G(y,s) |y+z|^\frac{\gamma}{p-1}\, dy \le C \int_{B(0,s^\frac{1}{\theta})} G(y,s) |y+z|^\frac{\gamma}{p-1} \, dy
\end{equation}
for almost all $\rho^\theta<s<T/3$.
First, we give an upper estimate to the integral on $B(0,s^{1/\theta})^c$.
By virtue of \eqref{eq:1.2},
we see that
\begin{equation}
\label{eq:4.3}
\begin{split}
\int_{B(0,s^\frac{1}{\theta})^c} G(y,s) |y+z|^\frac{\gamma}{p-1}\, dy
& = \int_{B(0,s^\frac{1}{\theta})^c} G(y,s) |y|^\frac{\gamma}{p-1} \left(\frac{|y+z|}{|y|}\right)^\frac{\gamma}{p-1}\, dy\\
& \le \int_{B(0,s^\frac{1}{\theta})^c} G(y,s) |y|^\frac{\gamma}{p-1} \left(1+\frac{|z|}{|y|}\right)^\frac{\gamma}{p-1}\, dy\\
& \le C\left(1+\frac{|z|}{s^\frac{1}{\theta}}\right)^\frac{\gamma}{p-1}\int_{B(0,s^\frac{1}{\theta})^c} G(y,s) |y|^\frac{\gamma}{p-1} \, dy\\
& \le C(|z|+s^\frac{1}{\theta})^\frac{\gamma}{p-1}
\end{split}
\end{equation}
for almost all $\rho^\theta<s<T/3$. 

Second, we give a lower estimate to the integral on $B(0,s^{1/\theta})$. 
Since $|z|>s^{1/\theta}>|y|$ for $y\in B(0,s^{1/\theta})$, we see that
\begin{equation}
\label{eq:4.4}
\begin{split}
\int_{B(0,s^\frac{1}{\theta})} G(y,s) |y+z|^\frac{\gamma}{p-1}\, dy
& \ge \int_{B(0,s^\frac{1}{\theta})} G(y,s) (|z|-|y|)^\frac{\gamma}{p-1}\, dy\\
& \ge (|z|-s^\frac{1}{\theta})^\frac{\gamma}{p-1} \int_{B(0,s^\frac{1}{\theta})} G(y,s) \, dy\\
& \ge C (|z|-s^\frac{1}{\theta})^\frac{\gamma}{p-1}
\end{split}
\end{equation}
for almost all $0<s<T/3$. 

Combining \eqref{eq:4.3} and \eqref{eq:4.4}, we obtain
\begin{equation}
\label{eq:4.5}
\begin{split}
&\int_{B(0,s^\frac{1}{\theta})^c} G(y,s) |y+z|^\frac{\gamma}{p-1}\, dy\le C\left(\frac{|z|+s^\frac{1}{\theta}}{|z|-s^\frac{1}{\theta}}\right)^\frac{\gamma}{p-1} \int_{B(0,s^\frac{1}{\theta})} G(y,s) |y+z|^\frac{\gamma}{p-1}\, dy
\end{split}
\end{equation}
for almost all $\rho^\theta<s<T/3$. 
Since $\rho^\theta<s<T/3$ and $|z|>T^{1/\theta}$, we have 
\begin{equation*}
\frac{|z|+s^\frac{1}{\theta}}{|z|-s^\frac{1}{\theta}} \le \frac{|z|+(T/3)^\frac{1}{\theta}}{|z|-(T/3)^\frac{1}{\theta}}
\le \frac{|z|+|z|/3^\frac{1}{\theta}}{|z|-|z|/3^\frac{1}{\theta}}
\le C.
\end{equation*}
This together with \eqref{eq:4.5} yields  \eqref{eq:4.2}. \\
\underline{\bf 2nd step.}
We give an upper estimate to the integral on $B(0,s^{1/\theta})$.
By \eqref{eq:2.1} and \eqref{eq:2.3}, we see that
\begin{equation}
\label{eq:4.7}
\begin{split}
\int_{B(0,s^\frac{1}{\theta})} G(y,s)|y+z|^\frac{\gamma}{p-1} \, dy
&\le C\int_{B(0,\rho)} G(y,\rho^\theta) \left|\frac{s^\frac{1}{\theta}}{\rho}y+z\right|^\frac{\gamma}{p-1}\,dy\\
&\le C\int_{B(0,\rho)} G(y,\rho^\theta) \left(\frac{s^\frac{1}{\theta}}{\rho}|y|+|z|\right)^\frac{\gamma}{p-1}\,dy\\
&\le C(|z|+s^\frac{1}{\theta})^\frac{\gamma}{p-1} \int_{B(0,\rho)} G(y,\rho^\theta) |y+z|^\frac{\gamma}{p-1}|y+z|^{-\frac{\gamma}{p-1}}\,dy\\
&\le C \left(\frac{|z|+s^\frac{1}{\theta}}{|z|-\rho} \right)^\frac{\gamma}{p-1} G(0,\rho^\theta) \int_{B(0,\rho)}  |y+z|^\frac{\gamma}{p-1}\,dy\\
&\le C \left(\frac{|z|+s^\frac{1}{\theta}}{|z|-\rho} \right)^\frac{\gamma}{p-1} \rho^{-N} \int_{B(z,\rho)}  |y|^\frac{\gamma}{p-1}\,dy\\
\end{split}
\end{equation}
for almost all $\rho^\theta<t<T/3$.
Since $\rho^\theta<s<T/3$ and $|z|>T^{1/\theta}$, we have
\[
\frac{|z|+s^\frac{1}{\theta}}{|z|-\rho} \le \frac{|z|+(T/3)^\frac{1}{\theta}}{|z|-(T/3)^\frac{1}{\theta}}
\le \frac{|z|+|z|/3^\frac{1}{\theta}}{|z|-|z|/3^\frac{1}{\theta}}
\le C.
\]
By combining \eqref{eq:4.2} and \eqref{eq:4.7}, we obtain \eqref{eq:4.1}.
\end{proof}
\begin{proof}[Proof of Theorem~\ref{Theorem:1.2}]
We assume  $p=p_0$.
Let $\nu>0$ be a sufficiently small constant. Let $\rho$ be such that
\[
0<\rho<(\nu T)^\frac{1}{\theta}.
\]
Set $v(x,t) := u(x+z, t+(2\rho)^\theta)$ for almost all $x\in\RN$ and $t \in (0, T-(2\rho)^\theta)$. Since $u$ is a solution to \eqref{eq:1.1} in $\RN\times[0,T)$, it follows from \eqref{eq:1.3} that 
\begin{equation*}
v(x,t) = \int_{\RN} G(x-y,t)v(y,0) \, dy + \int_0^t \int_{\RN} G(x-y, t-s) |y+z|^{-\gamma} v(y,s)^p \, dyds
\end{equation*}
holds for almost all $x\in\RN$ and $t\in (0, T-(2\rho)^\theta)$. 
Then, we see that
\[
\int_{\RN} G(x,t) v(x,t) \, dy<\infty
\]
holds for almost all $0<t<[T-(2\rho)^\theta]/3$. Set
\[
\overline{w}(t):=\int_{\RN} G(x,t) v(x,t) \, dx
\]
for almost all $0<t<[T-(2\rho)^\theta]/3$. By an argument similar to that used in the proof of  \eqref{eq:1.6}, we obtain
\begin{equation}
\label{eq:4.8}
\begin{split}
\infty> \overline{w}(t) 
&\ge c_1 \mu(B(z,\rho)) t^{-\frac{N}{\theta}}\\
& + c_2 t^{-\frac{N}{\theta}} \int_{\rho^\theta}^t s^{\frac{N}{\theta}} \left(\int_{\RN} G(y,s) |y+z|^\frac{\gamma}{p-1}\,dy\right)^{-(p-1)} \overline{w}(s)^p \, ds
\end{split}
\end{equation}
for almost all $\rho^\theta<t<[T-(2\rho)^\theta]/3$.
By applying Lemma~\ref{Lemma:4.1} to \eqref{eq:4.8}, we have
\begin{equation*}
\begin{split}
\infty> \overline{w}(t) 
&\ge c_1 \mu(B(z,\rho)) t^{-\frac{N}{\theta}}\\
& + c_2 t^{-\frac{N}{\theta}} 
\left(\rho^{-N}\int_{B(z,\rho)}|y|^\frac{\gamma}{p-1}\,dy\right)^{-(p-1)}
\int_{\rho^\theta}^t s^{\frac{N}{\theta}}  \overline{w}(s)^p \, ds
\end{split}
\end{equation*}
for almost all $\rho^\theta<t<[T-(2\rho)^\theta]/3$.
Again, by an iteration scheme similar to the one used in  the proof of \eqref{eq:1.6}, we have 
\begin{equation*}
\begin{split}
\infty>\overline{w}(t)  
& \ge \left[\beta^p \left(\rho^{-N}\int_{B(z,\rho)} |y|^\frac{\gamma}{p-1} \, dy\right)^{-1}\mu(B(z,\rho)) \left(\log\frac{t}{\rho^\theta}\right)^\frac{1}{p-1} \right]^{p^k} \\
& \qquad\qquad\qquad\times t^{-\frac{N}{\theta}} \rho^{-N}\int_{B(z,\rho)} |y|^\frac{\gamma}{p-1} \, dy\left( \log \frac{t}{\rho^\theta}\right)^{-\frac{1}{p-1}}\\
&\ge \left[\beta^p \left(\rho^{-N}\int_{B(z,\rho)} |y|^\frac{\gamma}{p-1} \, dy\right)^{-1}\mu(B(z,\rho)) \left(\log\frac{T}{5\rho^\theta}\right)^\frac{1}{p-1} \right]^{p^k} \\
& \qquad\qquad\qquad\times t^{-\frac{N}{\theta}} \rho^{-N}\int_{B(z,\rho)} |y|^\frac{\gamma}{p-1} \, dy\left( \log \frac{t}{\rho^\theta}\right)^{-\frac{1}{p-1}}, \quad k=1,2,\cdots
\end{split}
\end{equation*}
for almost all $T/5<t<T/4$, where $\beta>0$ is a constant. Then it follows that
\[
\beta^p \left(\rho^{-N}\int_{B(z,\rho)} |y|^\frac{\gamma}{p-1} \, dy\right)^{-1}\mu(B(z,\rho)) \left[\log\left(e+\frac{T}{5\rho^\theta}\right)\right]^\frac{1}{p-1}\le 1
\]
for all $0<\rho<(\nu T)^{1/\theta}$.
Similarly to the proof of \eqref{eq:1.6}, we obtain 
\[
\left(\dashint_{B(z,\sigma)} |y|^\frac{\gamma}{p-1} \, dy\right)^{-1}\mu(B(z,\sigma)) \le C
\left[\log\left(e+\frac{T}{\sigma^\theta}\right)\right]^{-\frac{1}{p-1} }
\]
for all $z\in\RN$ with $|z|>T^{1/\theta}$ and $0<\sigma<T^{1/\theta}$. This is the desired estimate and the proof of Theorem~\ref{Theorem:1.2} is complete.
\end{proof}

\section{Proof of Theorem~\ref{Theorem:1.3}}
%
In this section, we prove Theorem~\ref{Theorem:1.3}. To simplify the notation, we set $V(x):=|x|^{-\gamma}$.
\begin{proof}[Proof of Theorem~\ref{Theorem:1.3}]
By Lemma~\ref{Lemma:2.2}, it is sufficient to construct a supersolution to \eqref{eq:1.1}.
Furthermore, it is sufficient to consider the case of $T=1$. Indeed, for any solution $u$ to \eqref{eq:1.1} in $\RN\times[0,T)$, where $0<T<\infty$, we see that $u_\lambda(x,t):=\lambda^{(\theta-\gamma)/(p-1)} u(\lambda x, \lambda^\theta t)$ with $\lambda=T^{1/\theta}$ is also a solution to \eqref{eq:1.1} in $\RN\times[0,1)$. 
Let $\alpha>1$ be sufficiently close to $N/\gamma$.
Set
\[
\rho(t):=t^{1-\frac{\gamma}{\theta}-\frac{\theta-\gamma}{\theta}\frac{1}{p-1}(p-\frac{1}{\alpha'})}
\]
and
\[
W(t):=S(t)\mu + \rho(t)(S(t)\mu^r)^\frac{1}{r\alpha'},
\]
where $\alpha'=\alpha/(\alpha-1)$.
We will show that $W(t)$ is a supersolution to \eqref{eq:1.1} in $\RN\times[0,1)$. 
Since $p>p_\gamma$, $\alpha>1$ is sufficiently close to $N/\gamma$ and \eqref{eq:1.8} holds,
 we see that
\begin{equation}
\label{eq:5.1}
1-\frac{\theta-\gamma}{\theta}\frac{1}{p-1}\left(p-\frac{1}{\alpha'}\right)>0
\end{equation}
and
\begin{equation}
\label{eq:5.2}
1+\frac{\theta-\gamma}{\theta}p\left(1-\frac{1}{p-1}\left(p-\frac{1}{\alpha'}\right)\right) -\frac{\theta-\gamma}{\theta}\frac{1}{\alpha'}  >0.
\end{equation}
Since $G$ satisfies \eqref{eq:2.4} and  \eqref{eq:2.5}, by  the H\"{o}lder inequality and the Jensen inequality, we have 
\begin{equation}
\label{eq:5.3}
\begin{split}
S(t-s)V(S(s)\mu)^p
&\le (S(t-s)V^\alpha)^\frac{1}{\alpha} (S(t-s)(S(s)\mu)^{p\alpha'})^\frac{1}{\alpha'}\\
&\le \|S(t-s)V^\alpha\|_{L^\infty(\RN)}^\frac{1}{\alpha} (\|S(s)\mu\|_{L^\infty(\RN)}^{p\alpha'-1} S(t)\mu)^\frac{1}{\alpha'}\\
&\le  \|S(t-s)V^\alpha\|_{L^\infty(\RN)}^\frac{1}{\alpha} \|S(s)\mu^r\|_{L^\infty(\RN)}^{\frac{1}{r}(p-\frac{1}{\alpha'})}(S(t)\mu^r)^\frac{1}{r\alpha'}.
\end{split}
\end{equation}
Since $1<\alpha<N/\gamma$, by \eqref{eq:1.9} and Lemma~\ref{Lemma:2.1} we have
\begin{equation}
\label{eq:5.4}
\|S(t-s)V^\alpha\|_{L^\infty(\RN)}^\frac{1}{\alpha} \le C(t-s)^{-\frac{\gamma}{\theta}}
\end{equation}
and
\begin{equation}
\label{eq:5.5}
\|S(s)\mu^r\|_{L^\infty(\RN)}^\frac{1}{r} \le CC_3 s^{-\frac{\theta-\gamma}{\theta}\frac{1}{p-1}}
\end{equation}
for almost all $0<s<t$.
Then by \eqref{eq:5.3}, \eqref{eq:5.4} and \eqref{eq:5.5} we have
\begin{equation*}
S(t-s)V(S(s)\mu)^p\le CC_3^{p-\frac{1}{\alpha'}} (t-s)^{-\frac{\gamma}{\theta}} s^{-\frac{\theta-\gamma}{\theta}\frac{1}{p-1}\left(p-\frac{1}{\alpha'}\right)}(S(t)\mu^r)^\frac{1}{r\alpha'}
\end{equation*}
for almost all $0<s<t$.
By \eqref{eq:5.1}, the right hand side is integrable with respect to $s$. Then we obtain
\begin{equation}
\label{eq:5.6}
\begin{split}
\int_0^t S(t-s)V(S(s)\mu)^p \, ds
&\le CC_3^{p-\frac{1}{\alpha'}} t^{1-\frac{\gamma}{\theta}-\frac{\theta-\gamma}{\theta}\frac{1}{p-1}\left(p-\frac{1}{\alpha'}\right)}(S(t)\mu^r)^\frac{1}{r\alpha'}\\
&=   CC_3^{p-\frac{1}{\alpha'}} \rho(t) (S(t)\mu^r)^\frac{1}{r\alpha'}
\end{split}
\end{equation}
for almost all $0<t<1$. Similarly to \eqref{eq:5.3}, we see that
\begin{equation*}
\begin{split}
S(t-s)V(\rho(s)(S(s)\mu^r)^\frac{1}{r\alpha'})^p
&\le \rho(s)^p (S(t-s)V^\alpha)^\frac{1}{\alpha}(S(t-s)(S(s)\mu^r)^\frac{p}{r})^\frac{1}{\alpha'}\\
&\le C\rho(s)^p (t-s)^{-\frac{\gamma}{\theta}}(S(t-s)(S(s)\mu^r)^p)^\frac{1}{r \alpha'}\\
&\le C\rho(s)^p (t-s)^{-\frac{\gamma}{\theta}}\|S(s)\mu^r\|_{L^\infty(\RN)}^\frac{p-1}{r\alpha'} (S(t)\mu^r)^\frac{1}{r\alpha'}\\
&\le CC_3^{\frac{p-1}{\alpha'}}(t-s)^{-\frac{\gamma}{\theta}} s^{\frac{\theta-\gamma}{\theta}p\left(1-\frac{1}{p-1}\left(p-\frac{1}{\alpha'}\right)\right) -\frac{\theta-\gamma}{\theta}\frac{1}{\alpha'} }(S(t)\mu^r)^\frac{1}{r\alpha'}\\
\end{split}
\end{equation*}
for almost all $0<s<t$. By \eqref{eq:5.2} we have
\begin{equation}
\label{eq:5.7}
\begin{split}
&\int_0^t S(t-s)V(\rho(s)(S(s)\mu^r)^\frac{1}{r\alpha'})^p \, ds\\
&\le CC_3^{\frac{p-1}{\alpha'}} t^{1-\frac{\gamma}{\theta}+\frac{\theta-\gamma}{\theta}p\left(1-\frac{1}{p-1}\left(p-\frac{1}{\alpha'}\right)\right) -\frac{\theta-\gamma}{\theta}\frac{1}{\alpha'} }(S(t)\mu^r)^\frac{1}{r\alpha'}\\
&= CC_3^{\frac{p-1}{\alpha'}} \rho(t) (S(t)\mu^r)^\frac{1}{r\alpha'}\\
\end{split}
\end{equation}
for almost all $0<t<1$. Combining \eqref{eq:5.6} and \eqref{eq:5.7}, we see that
\begin{equation*}
\begin{split}
&S(t)\mu + \int_0^t S(t-s)V(W(s))^p \, ds\\
&\le S(t)\mu + 2^{p-1}\int_0^t S(t-s)V(S(s)\mu)^p \, ds + 2^{p-1} \int_0^t S(t-s)V(\rho(s)(S(s)\mu^r)^\frac{1}{r\alpha'})^p \, ds\\
&\le S(t)\mu + C(C_3^{p-\frac{1}{\alpha'}}+C_3^\frac{p-1}{\alpha'})\rho(t) (S(t)\mu^r)^\frac{1}{r\alpha'}\\
\end{split}
\end{equation*}
for almost all $0<t<1$. Taking a sufficiently small constant $C_3>0$ if necessary,
$W(t)$ is a supersolution to \eqref{eq:1.1} in $\RN\times[0,1)$.
Thus, the proof of Theorem~\ref{Theorem:1.3} is complete.
\end{proof}

\vspace{5pt}
\noindent
{\bf Acknowledgments.} 
The first author of this paper is grateful to Professor K. Ishige for mathematical discussions and proofreading of the manuscript. 

\begin{bibdiv}
\begin{biblist}
\bib{AD}{article}{
	author={D. Andreucci},
	author={E. DiBenedetto},
	title={On the Cauchy problem and initial traces for a class of evolution equations with strongly nonlinear sources},
	journal={Ann. Scuola Norm. Sup. Pisa Cl. Sci.},
	volume={18},
	date={1991}, 
	pages={363--441},
}
\bib{AT}{article}{
   author={Andreucci, Daniele},
   author={Tedeev, Anatoli F.},
   title={Universal bounds at the blow-up time for nonlinear parabolic
   equations},
   journal={Adv. Differential Equations},
   volume={10},
   date={2005},
   number={1},
   pages={89--120},
}
\bib{BP}{article}{
   author={Baras, Pierre},
   author={Pierre, Michel},
   title={Crit\`ere d'existence de solutions positives pour des \'{e}quations
   semi-lin\'{e}aires non monotones},
   journal={Ann. Inst. H. Poincar\'{e} Anal. Non Lin\'{e}aire},
   volume={2},
   date={1985},
   pages={185--212},
}
\bib{Ben}{article}{
   author={Ben Slimene, Byrame},
   title={Asymptotically self-similar global solutions for Hardy-H\'{e}non
   parabolic systems},
   journal={Differ. Equ. Appl.},
   volume={11},
   date={2019},
   number={4},
   pages={439--462},
}
\bib{STW}{article}{
    AUTHOR = {Ben Slimene, Byrame}
    AUTHOR = {Tayachi, Slim}
    AUTHOR = {Weissler, Fred B.},
     TITLE = {Well-posedness, global existence and large time behavior for
              {H}ardy-{H}\'{e}non parabolic equations},
   JOURNAL = {Nonlinear Anal.},
    VOLUME = {152},
      YEAR = {2017},
     PAGES = {116--148},
}
\bib{BC}{article}{
    AUTHOR = {Brezis, Ha\"{\i}m},
    AUTHOR = {Cazenave, Thierry},
     TITLE = {A nonlinear heat equation with singular initial data},
   JOURNAL = {J. Anal. Math.},
    VOLUME = {68},
      YEAR = {1996},
     PAGES = {277--304},
}
\bib{C19}{article}{
   author={Chikami, Noboru},
   title={Composition estimates and well-posedness for Hardy-H\'{e}non parabolic
   equations in Besov spaces},
   journal={J. Elliptic Parabol. Equ.},
   volume={5},
   date={2019},
   number={2},
   pages={215--250},
}
\bib{CIT}{article}{
   author={Chikami, Noboru},
   author={Ikeda, Masahiro},
   author={Taniguchi, Koichi},
   title={Well-posedness and global dynamics for the critical Hardy-Sobolev parabolic equation},
   journal={arXiv:2009.07108},
}
\bib{FT}{article}{
   author={Filippas, Stathis},
   author={Tertikas, Achilles},
   title={On similarity solutions of a heat equation with a nonhomogeneous
   nonlinearity},
   journal={J. Differential Equations},
   volume={165},
   date={2000},
   number={2},
   pages={468--492},
}
\bib{FI}{article}{
    AUTHOR = {Fujishima, Yohei},
    AUTHOR = {Ioku, Norisuke},
     TITLE = {Existence and nonexistence of solutions for the heat equation
              with a superlinear source term},
   JOURNAL = {J. Math. Pures Appl. (9)},
    VOLUME = {118},
      YEAR = {2018},
     PAGES = {128--158},
}
\bib{GK}{article}{
    AUTHOR = {Guedda, M.},
    AUTHOR = {Kirane, M.},
     TITLE = {A note on nonexistence of global solutions to a nonlinear
              integral equation},
   JOURNAL = {Bull. Belg. Math. Soc. Simon Stevin},
    VOLUME = {6},
      YEAR = {1999},
    NUMBER = {4},
     PAGES = {491--497},
}
\bib{H}{article}{
   author={Hirose, Munemitsu},
   title={Existence of global solutions for a semilinear parabolic Cauchy
   problem},
   journal={Differential Integral Equations},
   volume={21},
   date={2008},
   number={7-8},
   pages={623--652},
}		
\bib{HI01}{article}{
   author={Hisa, Kotaro},
   author={Ishige, Kazuhiro},
   title={Existence of solutions for a fractional semilinear parabolic
   equation with singular initial data},
   journal={Nonlinear Anal.},
   volume={175},
   date={2018},
   pages={108--132},
}
\bib{HIT}{article}{
   author={Hisa, Kotaro},
   author={Ishige, Kazuhiro},
   author={Takahashi, Jin},
   title={Existence of solutions for an inhomogeneous fractional semilinear
   heat equation},
   journal={Nonlinear Anal.},
   volume={199},
   date={2020},
   pages={111920, 28},
}
\bib{IKK}{article}{
    AUTHOR = {Ishige, Kazuhiro}, 
    AUTHOR = {Kawakami, Tatsuki},
    AUTHOR = {Kobayashi, Kanako},
     TITLE = {Global solutions for a nonlinear integral equation with a
              generalized heat kernel},
   JOURNAL = {Discrete Contin. Dyn. Syst. Ser. S},
    VOLUME = {7},
      YEAR = {2014},
    NUMBER = {4},
     PAGES = {767--783},
}
\bib{IKO}{article}{
    AUTHOR = {Ishige, Kazuhiro},
    AUTHOR = {Kawakami, Tatsuki},
    AUTHOR = {Okabe, Shinya},
     TITLE = {Existence of solutions for a higher-order semilinear parabolic
              equation with singular initial data},
   JOURNAL = {Ann. Inst. H. Poincar\'{e} Anal. Non Lin\'{e}aire},
    VOLUME = {37},
      YEAR = {2020},
    NUMBER = {5},
     PAGES = {1185--1209},
}
\bib{IKS}{article}{
    AUTHOR = {Ishige, Kazuhiro},
    AUTHOR = {Kawakami, Tatsuki}, 
    AUTHOR = {Sier\.{z}\k{e}ga, Miko\l aj},
     TITLE = {Supersolutions for a class of nonlinear parabolic systems},
   JOURNAL = {J. Differential Equations},
    VOLUME = {260},
      YEAR = {2016},
    NUMBER = {7},
     PAGES = {6084--6107},
}
\bib{Ju}{article}{
    AUTHOR = {Ju, Ning},
     TITLE = {The maximum principle and the global attractor for the
              dissipative 2{D} quasi-geostrophic equations},
   JOURNAL = {Comm. Math. Phys.},
    VOLUME = {255},
      YEAR = {2005},
    NUMBER = {1},
     PAGES = {161--181},
      ISSN = {0010-3616},
}
\bib{KK}{article}{,
    AUTHOR = {Kartsatos, A. G.},
    AUTHOR = {Kurta, V. V.},
     TITLE = {On blow-up results for solutions of inhomogeneous evolution
              equations and inequalities},
   JOURNAL = {J. Math. Anal. Appl.},
  FJOURNAL = {Journal of Mathematical Analysis and Applications},
    VOLUME = {290},
      YEAR = {2004},
    NUMBER = {1},
     PAGES = {76--85},
}
\bib{KY}{article}{
    AUTHOR = {Kozono, Hideo},
    AUTHOR = {Yamazaki, Masao},
     TITLE = {Semilinear heat equations and the {N}avier-{S}tokes equation
              with distributions in new function spaces as initial data},
   JOURNAL = {Comm. Partial Differential Equations},
  FJOURNAL = {Communications in Partial Differential Equations},
    VOLUME = {19},
      YEAR = {1994},
    NUMBER = {5-6},
     PAGES = {959--1014},
}
\bib{MM}{article}{
   author={Majdoub, Mohamed},
   author={Mliki, Ezzedine},
   title={Well-posedness for Hardy-H\'{e}non parabolic equations with fractional
   Brownian noise},
   journal={Anal. Math. Phys.},
   volume={11},
   date={2021},
   number={1},
   pages={20},
}
\bib{P}{article}{
   author={Phan, Quoc Hung},
   title={Singularity and blow-up estimates via Liouville-type theorems for
   Hardy-H\'{e}non parabolic equations},
   journal={J. Evol. Equ.},
   volume={13},
   date={2013},
   number={2},
   pages={411--442},
}
\bib{Qi}{article}{
   author={Qi, Yuan-wei},
   title={The critical exponents of parabolic equations and blow-up in ${\bf
   R}^n$},
   journal={Proc. Roy. Soc. Edinburgh Sect. A},
   volume={128},
   date={1998},
   number={1},
   pages={123--136},
}
\bib{RS}{article}{
   author={Robinson, James C.},
   author={Sier\.{z}\polhk ega, Miko\l aj},
   title={Supersolutions for a class of semilinear heat equations},
   journal={Rev. Mat. Complut.},
   volume={26},
   date={2013},
   pages={341--360},
}
\bib{SL01}{article}{
    AUTHOR = {Shang, Haifeng},
    AUTHOR = {Li, Fengquan},
     TITLE = {Singular parabolic equations with measures as initial data},
   JOURNAL = {J. Differential Equations},
    VOLUME = {247},
      YEAR = {2009},
    NUMBER = {6},
     PAGES = {1720--1745},
}
\bib{SL02}{article}{
    AUTHOR = {Shang, Haifeng},
    AUTHOR = {Li, Fengquan},
     TITLE = {On the {C}auchy problem for the evolution {$p$}-{L}aplacian
              equations with gradient term and source and measures as
              initial data},
   JOURNAL = {Nonlinear Anal.},
    VOLUME = {72},
      YEAR = {2010},
    NUMBER = {7-8},
     PAGES = {3396--3411},
}
\bib{S}{article}{
    AUTHOR = {Sugitani, Sadao},
     TITLE = {On nonexistence of global solutions for some nonlinear
              integral equations},
   JOURNAL = {Osaka Math. J.},
    VOLUME = {12},
      YEAR = {1975},
     PAGES = {45--51},
}
\bib{T}{article}{
    AUTHOR = {Takahashi, Jin},
     TITLE = {Solvability of a semilinear parabolic equation with measures
              as initial data},
 BOOKTITLE = {Geometric properties for parabolic and elliptic {PDE}'s},
    SERIES = {Springer Proc. Math. Stat.},
    VOLUME = {176},
     PAGES = {257--276},
}
\bib{Tayachi}{article}{
    AUTHOR = {Tayachi, Slim},
     TITLE = {Uniqueness and non-uniqueness of solutions for critical
              {H}ardy-{H}\'{e}non parabolic equations},
   JOURNAL = {J. Math. Anal. Appl.},
    VOLUME = {488},
      YEAR = {2020},
    NUMBER = {1},
     PAGES = {123976, 51},
}
\bib{W}{article}{
    AUTHOR = {Wang, Xuefeng},
     TITLE = {On the {C}auchy problem for reaction-diffusion equations},
   JOURNAL = {Trans. Amer. Math. Soc.},
    VOLUME = {337},
      YEAR = {1993},
    NUMBER = {2},
     PAGES = {549--590},
}
\bib{W01}{article}{
    AUTHOR = {Weissler, Fred B.},
     TITLE = {Local existence and nonexistence for semilinear parabolic
              equations in {$L^{p}$}},
   JOURNAL = {Indiana Univ. Math. J.},
    VOLUME = {29},
      YEAR = {1980},
    NUMBER = {1},
     PAGES = {79--102},
}
\bib{W02}{article}{
    AUTHOR = {Weissler, Fred B.},
     TITLE = {Existence and nonexistence of global solutions for a
              semilinear heat equation},
   JOURNAL = {Israel J. Math.},
    VOLUME = {38},
      YEAR = {1981},
    NUMBER = {1-2},
     PAGES = {29--40},
}
		
\end{biblist}
\end{bibdiv}  

\end{document}